\newtheorem{Theorem}{Theorem}[section]
\newtheorem{corollary}[Theorem]{Corollary}
\newtheorem{lemma}[Theorem]{Lemma}
\newtheorem{Lemma}[Theorem]{Lemma}
\newtheorem{Corollary}[Theorem]{Corollary}
\newtheorem{Definition}[Theorem]{Definition}
\numberwithin{equation}{section}
\def\eps{\varepsilon}
\newcommand{\R}{\mathbb{R}}
\newcommand{\N}{\mathbb{N}}
\newcommand{\B}{{\bf B}}
\newcommand{\cA}{{\mathcal A}}
\newcommand{\cC}{{\mathcal C}}
\newcommand{\cD}{{\mathcal D}}
\newcommand{\cL}{{\mathcal L}}
\renewcommand{\phi}{\varphi}
\renewcommand{\a }{\alpha }
\newcommand{\e }{\varepsilon }
\newcommand{\G }{\Gamma}
\newcommand{\n }{\nabla }
\newcommand{\vp }{\varphi }
\renewcommand{\phi}{\varphi}
\renewcommand{\O }{\Omega }
\newcommand{\be}{\begin{equation}}
\newcommand{\ee}{\end{equation}}
\newcommand{\de}{\partial}
\newcommand{\Ds}{(-\Delta)^s}
\newenvironment{altproof}[1]
{\noindent
{\bf Proof of {#1}}.}
{\nopagebreak\mbox{}\hfill $\Box$\par\addvspace{0.5cm}}
\title{Monotonicity and nonexistence results for some fractional elliptic
problems in the half space}
\author{Mouhamed Moustapha Fall   and   Tobias Weth}
\begin{document}
\date{}
\maketitle
 \let\thefootnote\relax\footnotetext{mouhamed.m.fall@aims-senegal.org (M. M. Fall), weth@math.uni-frankfurt (T. Weth).}
\let\thefootnote\relax\footnotetext{Goethe-Universit\"{a}t Frankfurt, Institut f\"{u}r Mathematik.
Robert-Mayer-Str. 10 D-60054 Frankfurt, Germany.}
\let\thefootnote\relax\footnotetext{ African Institute
for Mathematical Sciences  of Senegal. Km 2, Route de Joal. BP 1418
Mbour, Senegal.}
\bigskip

\begin{abstract}
\noindent We study a class of fractional elliptic problems of the
form $\Ds u=  f(u)$ in the half space $\R^N_+:=\{x \in \R^N\::\:
x_1>0\}$ with the
complementary Dirichlet condition $u \equiv 0$ in $\R^N \setminus
\R^N_+$. Under mild assumptions on the nonlinearity $f$, we show that
bounded positive solutions are increasing in $x_1$. For the special
case $f(u)=u^q$, we deduce nonexistence of positive bounded solutions
in the case where $q \ge 1$ and $q<\frac{N-1+2s}{N-1-2s}$ if $N \ge 1+2s$.
We do not require integrability assumptions on the solutions we study.
\end{abstract}

\section{Introduction}
\label{sec:introduction}
In the present paper, we are concerned with solutions $u\in
L^\infty(\R^N)$ of the semilinear fractional problem
\begin{equation}
  \label{eq:main-half-space}
\left\{
  \begin{aligned}
  &\Ds u=  f(u),  \quad u \ge 0 &&\qquad\text{in $\R^N_+$},\\
  &u =0 && \qquad \text{in $\R^N \setminus \R^N_+$.}
 \end{aligned}
\right.
\end{equation}
Here $s\in(0,1)$,  $N\in\N$, $\R^N_+:= \{x \in \R^N\::\: x_1>0\}$
and $f: [0,\infty)  \to [0,\infty)$ is a nonnegative, nondecreasing
and locally Lipschitz continuous nonlinearity. Special attention
will be given to the case $f(t)=t^q$ with $q > 1$. Due to
applications in physics, biology and finance, linear and nonlinear
equations involving the fractional Laplacian $\Ds$ have received
growing attention in recent years (see e.g.
\cite[Introduction]{NPV11} for various references), while they are
are still much less understood than their non-fractional
counterparts.

We briefly explain
in which sense we consider problems of type (\ref{eq:main-half-space}). For functions
$u\in C^2_c(\R^N)$, the fractional Laplacian $(-\Delta)^s$ is defined by
\begin{equation}
  \label{eq:3}
\Ds u(x)=a_{N,s}\lim_{\e\to0}\int_{|x-y|>\e}\frac{u(x)-u(y)}{|x-y|^{N+2s}}\,dy,
\end{equation}
where $a_{N,s}=s(1-s)\pi^{-N/2}4^s\frac{\Gamma(\frac{N}{2}+s)}{\Gamma(2-s)}$
(see e.g. \cite[Remark 3.11]{cabre-sire}). Let $\cL^1_s$ denote the
space of all functions $u:\R^N\to
\R$ such that $\int_{\R^N}\frac{|u(x)|}{1+|x|^{N+2s}}dx<\infty$. If $\O
\subset \R^N$ is an open subset and $g \in L^1_{loc}(\O)$, we say that
a function
$v \in \cL^1_s$ solves the equation $\Ds v=g$ in $\O$ in the sense of
distributions if
$$
\int_{\R^N}v\Ds\phi\,dx=\int_{\O}g\phi\,dx \qquad \text{for every $\phi
  \in C^2_c(\Omega)$.}
$$
Note that the integral on the left hand side is well defined since
\be
\label{sec:introduction-4}
|\Ds
\phi(x)|\leq \frac{\kappa\|\phi\|_{C^2_c}}{1+|x|^{N+2s}} \qquad \text{for every $\phi
  \in C^2_c(\Omega)$, $x \in \R^N$}
\ee
with a constant $\kappa= \kappa(N,s,\Omega)$ (see for instance
\cite{Fall-Weth}). The following is our first  main result:
\begin{Theorem}\label{main-result-1}
Suppose that $f: [0,\infty)  \to
[0,\infty)$ is a nonnegative, nondecreasing
and locally Lipschitz continuous function satisfying $f(t)>0$ for
$t>0$ and
\begin{equation}
  \label{eq:extra-assumption}
\lim_{\stackrel{r,t \to 0}{r \not=t}}\frac{f(r)-f(t)}{r-t} = 0.
\end{equation}
Then every bounded solution $u$ of \eqref{eq:main-half-space} is
increasing in $x_1$. Moreover, either $u \equiv 0$, or $u$ is strictly
increasing in $x_1$.
\end{Theorem}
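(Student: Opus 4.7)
The plan is to apply the method of moving planes in the $x_1$-direction, adapted to the nonlocal operator $\Ds$ and to the unbounded half-space. For $\lambda>0$ set $T_\lambda=\{x_1=\lambda\}$, let $x^\lambda=(2\lambda-x_1,x_2,\dots,x_N)$ denote reflection across $T_\lambda$, write $\Sigma_\lambda=\{0<x_1<\lambda\}$, and study the antisymmetric function $w_\lambda(x)=u(x^\lambda)-u(x)$. Because $u\ge 0$ on $\R^N$ and $u\equiv 0$ on $\{x_1\le 0\}$, one has $w_\lambda\ge 0$ on $\{x_1\le 0\}$ automatically, and the monotonicity of $u$ in $x_1$ is equivalent to $w_\lambda\ge 0$ in $\Sigma_\lambda$ for every $\lambda>0$. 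Whenever $x\in\Sigma_\lambda$ one has $x,x^\lambda\in\R^N_+$, so \eqref{eq:main-half-space} together with the Lipschitz character of $f$ gives the linearized equation $\Ds w_\lambda+c_\lambda(x)w_\lambda=0$ in $\Sigma_\lambda$, with $0\le c_\lambda\in L^\infty(\Sigma_\lambda)$ bounded by the Lipschitz constant of $f$ on $[0,\|u\|_\infty]$.

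To initiate the procedure at small $\lambda>0$, I would use boundary regularity for the fractional Dirichlet problem --- a consequence of \eqref{sec:introduction-4} together with the boundedness of $f(u)$ --- to show that $\|u\|_{L^\infty(\Sigma_\lambda)}\to 0$ as $\lambda\to 0^+$, which combined with \eqref{eq:extra-assumption} forces $\|c_\lambda\|_{L^\infty(\Sigma_\lambda)}\to 0$. An antisymmetric maximum principle on $\Sigma_\lambda$, obtained by writing $\Ds w_\lambda$ as a principal-value integral and exploiting antisymmetry across $T_\lambda$ to extract an extra positive contribution from the reflected region, then yields an estimate of the form $\|w_\lambda^-\|_\infty\le C\lambda^{2s}\|c_\lambda\|_\infty\|w_\lambda^-\|_\infty$, hence $w_\lambda\ge 0$ in $\Sigma_\lambda$ once $\lambda$ is small enough.

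Setting $\lambda^*=\sup\{\mu>0:w_\lambda\ge 0\text{ in }\Sigma_\lambda\text{ for every }\lambda\in(0,\mu]\}$, the aim is $\lambda^*=\infty$. If $\lambda^*<\infty$, continuity in $\lambda$ gives $w_{\lambda^*}\ge 0$ in $\Sigma_{\lambda^*}$, and a strong antisymmetric maximum principle for $\Ds+c_{\lambda^*}$ yields the dichotomy: either $w_{\lambda^*}\equiv 0$ in $\Sigma_{\lambda^*}$, which via the symmetry $u(x)=u(x^{\lambda^*})$ combined with boundary decay of $u$ and the strong maximum principle for $\Ds$ would force $u\equiv 0$ and hence $\lambda^*=\infty$, contradicting the standing assumption; or $w_{\lambda^*}>0$ strictly in $\Sigma_{\lambda^*}$. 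In the latter case one pushes $\lambda$ slightly past $\lambda^*$ by decomposing $\Sigma_{\lambda^*+\e}$ into an interior slab $\{\delta<x_1<\lambda^*-\delta\}$, on which strict positivity of $w_{\lambda^*}$ and continuity in $\lambda$ keep $w_{\lambda^*+\e}$ nonnegative, and two thin slabs of width $O(\delta+\e)$ abutting $\{x_1=0\}$ and $\{x_1=\lambda^*+\e\}$, on which the narrow-strip principle from the starting step applies.

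The main obstacle is that no decay of $u$ at infinity is available: the interior slab above is not relatively compact and the narrow-strip maximum principle on the boundary slabs must hold \emph{uniformly in $x'\in\R^{N-1}$}. The two crucial technical ingredients are therefore (i) a quantitative antisymmetric maximum principle for $\Ds+c(x)$ on slabs $\{a<x_1<b\}\subset\R^N$ whose constant depends only on $b-a$ and $\|c\|_\infty$ and not on the transverse direction, and (ii) a uniform-in-$x'$ positivity estimate for $w_{\lambda^*}$ on interior slabs, derived from a nonlocal Harnack-type inequality or an explicit fractional half-space barrier. Once these are available, the closing step goes through as in the bounded setting, yielding $\lambda^*=\infty$ and the nondecrease of $u$ in $x_1$; applying the strong antisymmetric maximum principle to each $w_\lambda$ when $u\not\equiv 0$ then upgrades this to strict monotonicity.
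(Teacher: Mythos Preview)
Your outline captures the moving-plane skeleton, but the continuation step has a genuine gap at what you call ingredient~(ii). A uniform-in-$x'$ lower bound for $w_{\lambda^*}$ on an interior slab $\{\delta<x_1<\lambda^*-\delta\}\times\R^{N-1}$ is not obtainable from a Harnack inequality or a barrier: Harnack is local and says nothing about the behaviour of $w_{\lambda^*}(x_1,x')$ as $|x'|\to\infty$, and with no decay or periodicity hypotheses on $u$ there is no reason why $\inf_{x'}w_{\lambda^*}(x_1,x')$ should be positive. The paper circumvents this by a translation--compactness argument rather than a uniform lower bound: if $(\cC_\lambda)$ fails for $\lambda_n\searrow\lambda^*$, one picks near-extremal bad points $x^n\in\Sigma_{\lambda_n}$ (satisfying~\eqref{eq:Linfty}), translates in the $x'$-direction so that they become $z^n=(x_1^n,0)$, and uses interior H\"older estimates together with the boundary bound $0\le u(x)\le C x_1^\alpha$ of Theorem~\ref{green-poiss-form_plus} to extract a locally uniform limit $\bar u$ of the translated solutions~$u_n$. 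Three cases are then treated separately: $x_1^n\to\lambda^*$; $\bar u\not\equiv 0$; and $\bar u\equiv 0$. The hypothesis~\eqref{eq:extra-assumption} is invoked \emph{only} in the last case, where the local uniform convergence $u_n\to 0$ forces the effective Lipschitz constant on any fixed ball to tend to zero. In your sketch, by contrast, \eqref{eq:extra-assumption} appears only in the starting step---where it is in fact unnecessary, since your own narrow-strip estimate already closes once $C\lambda^{2s}\|c_\lambda\|_\infty<1$ with the global Lipschitz bound on $c_\lambda$---and is absent from the continuation, where it is indispensable.

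A related difficulty is your plan to work directly with the differential equation $\Ds w_\lambda=c_\lambda w_\lambda$ and a pointwise antisymmetric maximum principle obtained by evaluating the principal-value integral at a minimiser. On the unbounded slab $\Sigma_\lambda$ the infimum of $w_\lambda$ need not be attained, so the estimate $\|w_\lambda^-\|_\infty\le C\lambda^{2s}\|c_\lambda\|_\infty\|w_\lambda^-\|_\infty$ is not justified as written. The paper avoids this by working throughout with the Green representation $u(x)=\int_{\R^N_+}G_\infty^+(x,y)f(u(y))\,dy$, established in Section~\ref{sec:representation} for merely bounded solutions, together with explicit reflection inequalities for $G_\infty^+$ (Lemma~\ref{green-monotonicity}). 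These give, at \emph{every} point $x$ of the negativity set $W$, an inequality of the form $0>v(x)\ge -C_u\|v\|_{L^\infty(W)}\int_{\Sigma_{2\lambda}}G_\infty^+(x^\lambda,y)\,dy$, so taking the supremum over $W$ requires no attained minimum; the same device drives both the starting step (Lemma~\ref{sec:introduction-3}) and the case analysis in the continuation.
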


We note that, for $C^1$-nonlinearities, condition
(\ref{eq:extra-assumption}) simply amounts to $f'(0)=0$.  Our second
main result is of Liouville type.

\begin{Theorem}\label{main-result-2}
If $N \le 1+2s$ and $q>1$ or $N > 1+2s$ and $1<q<
\frac{N-1+2s}{N-1-2s}$, then the problem
\begin{equation}
  \label{eq:main-liouville}
\left\{
  \begin{aligned}
    &\Ds u=  u^q,  \quad u \ge 0 &&\qquad\text{in $\R^N_+$},\\
  &u =0 && \qquad \text{in $\R^N \setminus \R^N_+$.}
 \end{aligned}
\right.
\end{equation}
only admits the trivial solution $u \equiv 0$.
\end{Theorem}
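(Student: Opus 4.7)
The plan is to combine the monotonicity provided by Theorem~\ref{main-result-1} with a translation/limit argument in the $x_1$-direction and a known Liouville-type result for $\Ds v = v^q$ on $\R^{N-1}$. Since $f(t)=t^q$ with $q>1$ is nonnegative, nondecreasing, locally Lipschitz, strictly positive on $(0,\infty)$, and $C^1$ with $f'(0)=0$, condition \eqref{eq:extra-assumption} holds and Theorem~\ref{main-result-1} applies. Hence any bounded solution $u$ of \eqref{eq:main-liouville} is monotone nondecreasing in $x_1$, and I may assume $u\not\equiv 0$ and $u$ strictly increasing in $x_1$, aiming for a contradiction. A strong maximum principle for $\Ds$ in $\R^N_+$ (using $\Ds u = u^q \ge 0$ and $u\not\equiv 0$) then yields $u>0$ throughout $\R^N_+$.

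Next I would consider the shifts $u_n(x)= u(x_1+n,x')$ for $n\in\N$. By monotonicity and the $L^\infty$-bound, $u_n$ increases pointwise to some bounded function $v$ depending only on $x'$, with $v(x')\ge u(1,x')>0$ for every $x'\in\R^{N-1}$. Each $u_n$ is a distributional solution of $\Ds u_n=u_n^q$ on $\{x_1>-n\}$; given any $\phi\in C_c^2(\R^N)$, its support lies in that half-space for $n$ sufficiently large, and
$$
\int_{\R^N} u_n\, \Ds\phi\,dx \;=\; \int_{\R^N} u_n^q\, \phi\,dx.
$$
The decay estimate \eqref{sec:introduction-4} on $\Ds\phi$, together with the uniform bound $\|u_n\|_\infty\le\|u\|_\infty$, allows passage to the limit $n\to\infty$ via dominated convergence, producing a bounded, everywhere positive distributional solution $v$ of $\Ds v=v^q$ on all of $\R^N$.

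Since $v$ is independent of $x_1$, integrating out $y_1$ in the singular integral \eqref{eq:3} (or equivalently using the Fourier symbol $|\xi|^{2s}$) yields $\Ds v(x)=(-\Delta_{x'})^{s}v(x')$ as a distribution, so $v$ is a bounded positive distributional solution of $(-\Delta_{x'})^s v = v^q$ on $\R^{N-1}$. The hypotheses on $(N,s,q)$ are precisely the subcritical regime in dimension $N-1$: either $N-1\le 2s$ with any $q>1$, or $N-1>2s$ with $1<q<\frac{(N-1)+2s}{(N-1)-2s}$. In either case, the fractional Liouville theorem for bounded positive solutions on Euclidean space (in the spirit of Chen--Li--Ou) forces $v\equiv 0$, contradicting $v>0$, and so the original $u$ must have been identically zero.

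The main obstacles are technical rather than structural: pinning down a strong maximum principle sharp enough to conclude $u>0$ in $\R^N_+$ from $u\not\equiv 0$; rigorously identifying $\Ds v$ with $(-\Delta_{x'})^s v$ as distributions when $v$ is $x_1$-independent (and in particular justifying that $v\in\cL^1_s(\R^N)$, which follows from $v\in L^\infty$ and $2s>0$); and invoking a Liouville theorem for $\Ds w = w^q$ on Euclidean space that uniformly covers both the low-dimensional case $N-1\le 2s$ and the genuinely subcritical case $1<q<\frac{N-1+2s}{N-1-2s}$.
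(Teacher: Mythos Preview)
Your proposal is correct and follows essentially the same route as the paper's proof in Section~\ref{sec:proof-theorem}: pass to the limit in the $x_1$-direction using the monotonicity from Theorem~\ref{main-result-1}, obtain a bounded positive solution of $\Ds v=v^q$ on $\R^N$ independent of $x_1$, reduce to $(-\Delta_{x'})^s v=v^q$ on $\R^{N-1}$, and conclude by the full-space Liouville theorem (Theorem~\ref{main-result-3}, due to Jin--Li--Xiong). Two minor remarks: the strong maximum principle step is unnecessary, since Theorem~\ref{main-result-1} already gives strict monotonicity in $x_1$, and combined with continuity (Theorem~\ref{green-poiss-form_plus}) and $u(0,x')=0$ this forces $u>0$ on $\R^N_+$; and the paper carries out the dimension reduction by first upgrading to a classical solution via regularity and then explicitly integrating out $z_1$ in the singular integral, which cleanly handles the identification you flag as an obstacle.
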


Our results complement the following recent Liouville type result of
Jin, Li and Xiong \cite{JLX} for the corresponding full space problem
\begin{equation}
\label{eq:main-liouville-rn}
\Ds u=  u^q, \qquad u>0 \qquad \text{in $\R^N$.}
\end{equation}

\begin{Theorem}\label{main-result-3} (see \cite{JLX})\\
Suppose that $N\le 2s$ and $q>0$ or $N> 2s$ and $0<q< \frac{N+2s}{N-2s}$. Then
~(\ref{eq:main-liouville-rn}) has no bounded solution.
\end{Theorem}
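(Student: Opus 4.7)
Since the result is a full-space Liouville theorem without any decay assumption on $u$, the natural strategy is the \emph{method of moving spheres} (based on the Kelvin transform), rather than the moving planes method which would require integrability. For each $z \in \R^N$ and $\lambda > 0$, introduce the Kelvin transform
\begin{equation*}
u_{z,\lambda}(x) = \Bigl(\frac{\lambda}{|x-z|}\Bigr)^{N-2s} u\Bigl(z + \frac{\lambda^2 (x-z)}{|x-z|^2}\Bigr).
\end{equation*}
A direct computation using the conformal covariance of $\Ds$ gives
\begin{equation*}
\Ds u_{z,\lambda}(x) = \Bigl(\frac{\lambda}{|x-z|}\Bigr)^{\tau}\, u_{z,\lambda}(x)^q, \qquad \tau := N+2s - q(N-2s),
\end{equation*}
and the hypothesis on $q$ is exactly $\tau > 0$ (including the low-dimensional cases $N \le 2s$, which force $\tau > 0$ automatically for any $q > 0$). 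Standard fractional elliptic regularity makes $u$ classical, and the strong maximum principle rules out nontrivial zeros, so I may assume $u > 0$ on $\R^N$.

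The first step is an initiation lemma: for every $z \in \R^N$ there exists $\lambda_0(z)>0$ such that $u_{z,\lambda}\le u$ on $\{|x-z|\ge \lambda\}$ for all $\lambda \in (0,\lambda_0)$. This is standard and follows from the local smoothness and positivity of $u$ near $z$, combined with the behavior $u_{z,\lambda}(x) = O(|x-z|^{-(N-2s)})$ as $|x-z|\to\infty$ (so the Kelvin transform is small at infinity). Setting $\bar\lambda(z) := \sup\{\lambda>0: u_{z,\mu}\le u\text{ on }\{|x-z|\ge \mu\}\text{ for all }\mu\in(0,\lambda)\}$, the core claim is that $\bar\lambda(z) = \infty$ for every $z$.

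To prove this, suppose by contradiction that $\bar\lambda:=\bar\lambda(z_0) < \infty$ for some $z_0$. By continuity, $w := u - u_{z_0,\bar\lambda}\ge 0$ on $\Sigma := \{|x-z_0|>\bar\lambda\}$, with $w \equiv 0$ on $\R^N\setminus (\Sigma\cup\{z_0\})$ (using that $u$ vanishes nowhere and the symmetric definition of $u_{z_0,\bar\lambda}$ under inversion). In $\Sigma$, using $(\bar\lambda/|x-z_0|)^\tau < 1$ and monotonicity of $t\mapsto t^q$,
\begin{equation*}
\Ds w = u^q - \Bigl(\frac{\bar\lambda}{|x-z_0|}\Bigr)^\tau u_{z_0,\bar\lambda}^q \ge u^q - u_{z_0,\bar\lambda}^q \ge 0.
\end{equation*}
The strong maximum principle for $\Ds$ on $\Sigma$ then yields either $w > 0$ strictly in $\Sigma$ or $w \equiv 0$ on all of $\R^N$. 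The latter is impossible, for then $\Ds u = u^q = (\bar\lambda/|x-z_0|)^\tau u^q$ would force $(\bar\lambda/|x-z_0|)^\tau \equiv 1$, contradicting $\tau > 0$. So $w>0$ strictly in $\Sigma$, and a Hopf-type boundary lemma for $\Ds$ on the sphere $\{|x-z_0|=\bar\lambda\}$, together with uniform estimates showing that small perturbations of $\bar\lambda$ preserve the inequality near infinity (exploiting the extra smallness factor $\tau > 0$ in the subcritical regime), allows one to push past $\bar\lambda$. This contradicts the definition of $\bar\lambda$.

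Hence $\bar\lambda(z) = \infty$ for all $z \in \R^N$. A calculus lemma of Li--Zhu/Li--Zhang type then forces $u$ to be constant on $\R^N$. But $\Ds(\text{const}) = 0 = u^q$ forces $u\equiv 0$, contradicting $u>0$. The principal obstacles are (i) proving the strong maximum principle and Hopf lemma for $\Ds$ on the unbounded exterior domain $\Sigma$ without assuming decay of $u$, which is most cleanly handled by passing to the Caffarelli--Silvestre extension where these become classical facts about the degenerate weighted operator $\divergenz(t^{1-2s}\nabla\,\cdot\,)$; and (ii) verifying the initiation and the push-past-$\bar\lambda$ steps uniformly in the far-field region, again facilitated by the extra strictly positive factor $\tau$ which is the sole quantitative manifestation of subcriticality.
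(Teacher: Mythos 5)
First, a point of orientation: the paper does not prove Theorem~\ref{main-result-3} at all; it is quoted from Jin--Li--Xiong \cite{JLX} (and was obtained independently in \cite{colorado-et-al} for $s\ge\frac12$), so your argument can only be judged on its own merits. Judged that way, your moving-spheres outline is the standard route taken in that literature, but as written it has genuine gaps. The most concrete one is the claim that $w=u-u_{z_0,\bar\lambda}$ vanishes on $\R^N\setminus(\Sigma\cup\{z_0\})$: inside the sphere $w$ is not zero; by the inversion symmetry it is essentially the negative of a weighted copy of $w|_{\Sigma}$, so $w$ changes sign on $\R^N$. Because $\Ds$ is nonlocal, the plain strong maximum principle and Hopf lemma you invoke on $\Sigma$ require a sign of $w$ on all of $\R^N$ and therefore do not apply; one needs a maximum principle for functions antisymmetric under the sphere inversion (or, more commonly in this setting, one runs the comparison on the equivalent integral equation with the Riesz kernel, pairing the inner and outer contributions as is done for the Green-function argument in Section~\ref{sec:proof-monot-result} of this paper).

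Second, the low-dimensional cases $N\le 2s$ (i.e.\ $N=1$, $s\ge\frac12$) are not covered by your scheme: there $N-2s\le 0$, so the Kelvin transform $u_{z,\lambda}$ does not decay at infinity (it grows when $N<2s$ and tends to the positive constant $u(z)$ when $N=2s$), and your initiation step, which rests on ``$u_{z,\lambda}=O(|x-z|^{-(N-2s)})$ is small at infinity'', fails; these cases need a separate treatment. Third, the step where you ``push past $\bar\lambda$'' is precisely the heart of a Liouville theorem without decay hypotheses and is only gestured at: on the unbounded exterior region, with $w$ possibly tending to zero only at the Kelvin rate, one must combine narrow-region or small-volume maximum principles with uniform far-field estimates exploiting $\tau>0$, and none of this is carried out. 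With the antisymmetric maximum principle, the separate argument for $N\le 2s$, and a quantitative push-past step supplied, your plan matches the known proofs; without them it is a program rather than a proof.
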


We note that this result has also been obtained independently in
\cite{colorado-et-al} in the case $s \ge \frac{1}{2}$. Before that,
the special case $s=\frac{1}{2}$ had been considered in
\cite{Ou}, whereas in
\cite{Chen_Li_Ou05} the result was proved for a restricted class of solutions.\\

To put our results into perspective, some remarks are in order. Theorem~\ref{main-result-2} is an
improvement of \cite[Corollary 1.6]{Fall-Weth}, where the authors
established nonexistence of a restricted class of solutions $u$ of (\ref{eq:main-liouville})
in the subcritical case $N > 2s$ and $1<q\leq
\frac{N+2s}{N-2s}$. More precisely, in \cite{Fall-Weth} we assumed
that $u$ is contained in the Sobolev space $\cD^{s,2}(\R^N_+)$ defined
as the completion of $C_c^\infty(\R^N)$ with respect to the
norm given by
$$
\|u\|^2= \int_{\R^2N}\frac{(u(x)-u(y))^2}{|x-y|^{N+2s}}\,dxdy.
$$
The argument of \cite{Fall-Weth}, relying on the method of moving
spheres, does not apply under the assumptions of
Theorem~\ref{main-result-2}.\\
Theorem~\ref{main-result-1} is proved by a variant of the moving
plane method based on extensions and modifications of techniques in
the papers \cite{BCN-1996} resp. \cite{RW}, which were devoted to
second order and polyharmonic boundary value problems, respectively.
The first key step in the argument is to show, without a priori
integrability assumptions, that bounded solutions of
(\ref{eq:main-half-space}) admit a Green function representation.
This representation is obtained,  via an approximation argument,
from Green-Poisson type formulas in balls. Once the Green function
representation is obtained, we carry out a moving plane argument for
integral equations. We note that moving plane arguments for integral
equations have been applied very successfully in recent years, see
e.g. \cite{Chang_Yang, BGW,Chen_Li_Ou,Chen_Li_Ou05,BiLoWa,RW}. In
the present situation, the lack of integrability assumptions creates
additional difficulties which require to argue
somewhat differently than in earlier papers.\\
Theorem~\ref{main-result-2} is deduced from Theorems
\ref{main-result-1} and \ref{main-result-3} by considering the limits
of solutions of (\ref{eq:main-liouville}) as $x_1 \to \infty$, which,
considered as functions of $(x_2,\dots,x_N)$, solve
(\ref{eq:main-liouville-rn}) in $\R^{N-1}$.\\
The boundedness assumption in Theorem \ref{main-result-2} can be
replaced by only assuming boundedness in compact subsets of $\R^N_+$
if the assumption on $q$ is strengthened to $1<q< \frac{N+2s}{N-2s}$
in case $N > 2s$. This can be deduced from Theorems
\ref{main-result-2} and \ref{main-result-3} by the argument used in
\cite[Section 4]{RW1} for the polyharmonic version of
(\ref{eq:main-liouville}). The argument is based on the doubling-lemma
(see \cite{PQS}).\\
The combination of the Liouville type results
Theorem~\ref{main-result-2} and \ref{main-result-3} are expected to
give rise, via a Gidas-Spruck type rescaling argument (see
\cite{GS1}), to a priori bounds for solutions to more general
integral equations in bounded domains and also to elliptic boundary
value problems of second order with mixed nonlinear boundary
conditions. For applications of this type, it is essential that
Theorems \ref{main-result-2} and \ref{eq:main-liouville} do not
contain a priori integrability assumptions. This topic will be
considered by the authors in a future work.\\
The combination of Liouville
type results with rescaling arguments has already been applied
successfully by Cabr\'e and Tan \cite{cabre-tan} for nonlinear
boundary value problems involving the spectral theoretic square root
of the Dirichlet Laplacian, denoted by $\cA_{1/2}$ in
\cite{cabre-tan}. In particular, the analogue of
Theorem~\ref{main-result-2} with $\Ds$ replaced by $\cA_{1/2}$ has
been proved in \cite[Theorem 1.5]{cabre-tan}. The subtle
differences between $\Ds$ and spectral theoretic powers of the
Dirichlet Laplacian are discussed in
\cite[Remark 0.4]{fall} from a PDE point of view and in \cite{SV03} in terms of
stochastic processes. Because of these differences, it remains
unclear whether Theorem~\ref{main-result-2} can also be obtained via
 similar methods as in \cite{cabre-tan}. The approach of the present
 paper is completely different. Moreover, the monotonicity
 result given by Theorem~\ref{main-result-1} is not available yet for the
corresponding problem with spectral theoretic powers.\\

The paper is organized as follows. In Section~\ref{sec:preliminaries}
we collect preliminary results on (distributional) solutions of $\Ds u = f$ on some open subset of $\R^N$ with bounded
$f$. In Section~\ref{sec:representation} we show that bounded
solutions $u$ of the problem $\Ds u =f$ in $\R^N_+$ with $u \equiv 0$
in $\R^N \setminus \R^N_+$ admit a Green function representation
whenever $f$ is bounded and nonnegative. In Section~\ref{sec:proof-monot-result} we
complete the proof of Theorem~\ref{main-result-1}, and in
Section~\ref{sec:proof-theorem} we complete the proof of
Theorem~\ref{main-result-2}. The appendix contains a
regularity result needed in the proof of Theorem~\ref{main-result-1}.\\

\textbf{Acknowledgment:} The second author wishes to thank Enrico
Valdinoci und Eduardo Colorado for helpful discussions. The first
author is funded by the Alexander von Humboldt foundation and would
like to thank Krzysztof Bogdan for useful discussions.

\section{Preliminaries}
\label{sec:preliminaries}
Here and in the following, we consider $N\geq 1$ and $s\in(0,1)$. We
write $\B=\{x\in\R^N: |x|<1\}$ for the open unit ball in $\R^N$ and
set $B_R:= \{x \in \R^N: |x|<R\}$ for $R>0$. We start by recalling the
following estimate, see \cite{Fall-Weth}.
\begin{Lemma}\label{lem:bondDsvp}
Let $\O \subset \R^N$ be a bounded open set.  Then there
 exists a constant $C=C(N,s,\O)>0$ such that for all  $\vp\in C^2_c(\O)$ and for all  $x\in \R^N$
 \begin{equation}
   \label{eq:24}
\left|\int_{|x-y|>\e}\frac{\vp(x)-\vp(y)}{|x-y|^{N+2s}}dy  \right|\leq
\frac{C\|\vp\|_{C^2(\Omega)}}{1+|x|^{N+2s}} \qquad \text{for all $\e\in(0,1).$}
 \end{equation}
\end{Lemma}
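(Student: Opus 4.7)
The plan is to split the estimate into two regimes according to whether the point $x$ lies near or far from the support of $\vp$. Fix $R>0$ such that $\O\subset B_R$, so that $\supp\vp\subset B_R$ and in particular $\vp$ and all its derivatives vanish outside $B_R$.

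First I would treat the \emph{far regime} $|x|\geq 2R$. Here $\vp(x)=0$ and for any $y\in\supp\vp$ one has $|x-y|\geq |x|-R\geq |x|/2$, so
\[
\left|\int_{|x-y|>\e}\frac{\vp(x)-\vp(y)}{|x-y|^{N+2s}}\,dy\right|
=\left|\int_{\supp\vp}\frac{\vp(y)}{|x-y|^{N+2s}}\,dy\right|
\leq \frac{2^{N+2s}|\O|\,\|\vp\|_{\infty}}{|x|^{N+2s}},
\]
and since $|x|\geq 2R$ implies $|x|^{N+2s}\geq c(1+|x|^{N+2s})$ for some $c=c(N,s,R)>0$, this already yields an estimate of the desired form, uniformly in $\e$.

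For the \emph{near regime} $|x|\leq 2R$ it suffices to produce a uniform bound by a constant times $\|\vp\|_{C^2(\Omega)}$, since $1+|x|^{N+2s}$ is then bounded above. I would split the integration domain further along $|x-y|=1$. On the outer piece $|x-y|>1$ the trivial estimate
\[
\left|\int_{|x-y|>1}\frac{\vp(x)-\vp(y)}{|x-y|^{N+2s}}\,dy\right|
\leq 2\|\vp\|_{\infty}\int_{|z|>1}\frac{dz}{|z|^{N+2s}}
\]
gives a bound $C\|\vp\|_{\infty}$. On the annular piece $\e<|x-y|<1$ the change of variables $y=x+z$ reduces matters to an integral of $(\vp(x)-\vp(x+z))/|z|^{N+2s}$ over the symmetric set $\e<|z|<1$. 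Here the key step is to exploit symmetry: the linear term in the Taylor expansion of $\vp(x+z)$ around $x$ integrates to zero on the annulus, so that
\[
\int_{\e<|z|<1}\frac{\vp(x)-\vp(x+z)}{|z|^{N+2s}}\,dz
= -\frac{1}{2}\int_{\e<|z|<1}\frac{\langle D^2\vp(\xi_z)z,z\rangle}{|z|^{N+2s}}\,dz,
\]
with $\xi_z$ on the segment from $x$ to $x+z$. The last integral is bounded in absolute value by $\|\vp\|_{C^2}\int_{|z|<1}|z|^{2-N-2s}\,dz$, which is finite and independent of $\e$ because $2s<2$.

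The main obstacle is precisely this symmetry/cancellation argument: the naive pointwise bound $|\vp(x)-\vp(y)|\leq \|\nabla\vp\|_{\infty}|x-y|$ is not integrable near $y=x$ when $s\geq 1/2$, so one really has to cancel the first-order Taylor term against itself on the symmetric annulus to obtain a bound uniform in $\e\in(0,1)$. Once both regimes are handled, combining the two estimates and adjusting the constant finishes the proof of \eqref{eq:24}.
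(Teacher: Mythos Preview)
Your argument is correct and is the standard one: split into the far regime $|x|\ge 2R$ (where $\vp(x)=0$ and the integral localizes to $\supp\vp$) and the near regime $|x|\le 2R$ (where one only needs a uniform bound), then on the inner annulus $\e<|z|<1$ use the second-order Taylor expansion together with the odd symmetry of the annulus to kill the first-order term $\nabla\vp(x)\cdot z$, leaving an integrable $|z|^{2-N-2s}$ singularity. One small presentational point: when you write that ``the linear term integrates to zero'', note that this integral is only conditionally defined for $s\ge 1/2$, but since you are working on the \emph{bounded} annulus $\{\e<|z|<1\}$ the integral is absolutely convergent for each fixed $\e$ and vanishes by the change of variables $z\mapsto -z$; this is precisely the point you flag in your last paragraph.

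As for comparison with the paper: the paper does not give its own proof of this lemma but simply cites \cite{Fall-Weth} for it. The argument there is essentially the one you wrote, so there is no meaningful difference in approach.
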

As a consequence, $\Ds u$ can be defined for functions $u \in \cL^1_s$
in the following way in distributional sense. Here we recall that
$\cL^1_s$ is the space of all functions $u:\R^N\to
\R$ such that $\int_{\R^N}\frac{|u(x)|}{1+|x|^{N+2s}}dx<\infty$.

\begin{Definition}\label{def:fraclap-non-loc}
Let $\O$ be an open subset of $\R^N$. Given $u\in \cL^1_s$, the distribution  $\Ds u\in \cD'(\O)$
is defined as
$$
\langle\Ds u,\vp \rangle =\int_{\R^N} u\Ds\vp dx \qquad \text{for all $\vp \in C^\infty_c(\O).$}
$$
\end{Definition}

The following result is contained in \cite[Lemma 3.8]{BB-Schr}.

\begin{Lemma}\label{lem:discont}
Let $\O \subset \R^N$ be open. If $u\in \cL^1_s\cap C^2(\O)$, then the limit
$\lim \limits_{\e\to0}\int_{|x-y|>\e}\frac{u(x)-u(y)}{|x-y|^{N+2s}}dy$
exists for every $x \in \O$. Moreover, $\Ds u$ is a regular
distribution given by
\begin{equation}
  \label{eq:10}
\Ds u(x)=a_{N,s }
\lim_{\e\to0}\int_{|x-y|>\e}\frac{u(x)-u(y)}{|x-y|^{N+2s}}dy \qquad
\text{for $x \in \Omega$.}
\end{equation}
\end{Lemma}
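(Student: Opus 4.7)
The plan is to prove the pointwise existence of the principal value integral first, and then identify it with the distributional Laplacian via a symmetrization argument.

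For the pointwise existence at a fixed $x \in \Omega$, I would pick $r>0$ with $\overline{B_{2r}(x)} \subset \Omega$ and split the integral into a near part over $\e<|x-y|<r$ and a far part over $|x-y|\geq r$. On the far part, the integrand is controlled by $|u(x)|/|x-y|^{N+2s} + |u(y)|/|x-y|^{N+2s}$; the first term is clearly integrable, and for the second, comparing $|x-y|^{N+2s}$ with $1+|y|^{N+2s}$ for $|x-y|\geq r$ reduces the integrability to $u\in\cL^1_s$. On the near part, I would use a second-order Taylor expansion $u(y)=u(x)+\nabla u(x)\cdot(y-x)+R(x,y)$ with $|R(x,y)|\leq C|y-x|^2$ (where $C$ depends on $\|u\|_{C^2(B_r(x))}$). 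The linear term integrates to zero on every symmetric annulus $\e<|x-y|<r$, so only the remainder contributes; the remainder integral is bounded by $C\int_0^r \rho^{1-2s}d\rho<\infty$, which is absolutely convergent. Combining the two parts yields existence of the limit and, crucially, a bound that is uniform in $\e$ and in $x$ ranging over a compact $K\subset\Omega$.

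Call the pointwise limit $F(x):=a_{N,s}\,\lim_{\e\to0}\int_{|x-y|>\e}\frac{u(x)-u(y)}{|x-y|^{N+2s}}\,dy$. To identify $F$ with $\Ds u$ in $\cD'(\Omega)$, I would introduce the truncations
\begin{equation*}
\Ds_\e\phi(x):=a_{N,s}\int_{|x-y|>\e}\frac{\phi(x)-\phi(y)}{|x-y|^{N+2s}}\,dy,
\end{equation*}
for $\phi\in\{u,\varphi\}$ and $\varphi\in C^\infty_c(\Omega)$. By Lemma~\ref{lem:bondDsvp}, $|\Ds_\e\varphi(x)|\leq C(1+|x|^{N+2s})^{-1}$ uniformly in $\e\in(0,1)$, so dominated convergence (using $u\in\cL^1_s$) gives $\int u\,\Ds_\e\varphi\,dx\to\int u\,\Ds\varphi\,dx=\langle\Ds u,\varphi\rangle$.

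The central identity is that for each fixed $\e>0$ one has $\int u\,\Ds_\e\varphi\,dx=\int \varphi\,\Ds_\e u\,dx$. Both integrals are well defined: the first by the uniform bound above, and the second because $\varphi$ has compact support in $\Omega$ and the analysis from the first paragraph shows $\Ds_\e u$ is bounded on $\mathrm{supp}\,\varphi$. The identity itself follows by a standard symmetrization on the symmetric domain $\{|x-y|>\e\}$: after renaming variables and averaging, both sides equal
\begin{equation*}
\tfrac{a_{N,s}}{2}\iint_{|x-y|>\e}\frac{(u(x)-u(y))(\varphi(x)-\varphi(y))}{|x-y|^{N+2s}}\,dy\,dx,
\end{equation*}
and Fubini is justified because (when $\e>0$ is fixed) the individual inner integrals converge absolutely thanks to the cutoff and $u\in\cL^1_s$. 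Finally, passing $\e\to 0$ in the right-hand side is done via dominated convergence: the uniform bound on $|\Ds_\e u(x)|$ for $x\in\mathrm{supp}\,\varphi$ established in the first paragraph provides the integrable majorant $\|\varphi\|_\infty M\,\mathbf{1}_{\mathrm{supp}\,\varphi}$, so $\int\varphi\,\Ds_\e u\,dx\to\int\varphi F\,dx$. Combining these convergences yields $\langle\Ds u,\varphi\rangle=\int \varphi F\,dx$ for every $\varphi\in C^\infty_c(\Omega)$, which is exactly the claim.

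The main obstacle I anticipate is the uniform-in-$\e$ control of $\Ds_\e u$ on compact subsets of $\Omega$, because without it one cannot justify the last dominated-convergence step; this is precisely why the cancellation of the Taylor linear term on symmetric annuli is essential and why merely pointwise existence of the principal value is not enough to conclude. Once that uniform bound is in hand, the rest is a careful but standard interchange of limits based on the symmetric form of the nonlocal operator.
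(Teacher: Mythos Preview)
Your argument is correct. The paper does not actually prove this lemma; it simply quotes the result as \cite[Lemma 3.8]{BB-Schr} and moves on. Your write-up therefore supplies a self-contained proof that the paper omits. The two-step structure you outline---first the Taylor expansion with cancellation of the odd part on symmetric annuli to get pointwise existence and a bound on $\Ds_\e u$ uniform in $\e$ over compact subsets of $\Omega$, then the symmetrization identity $\int u\,\Ds_\e\varphi = \int \varphi\,\Ds_\e u$ combined with dominated convergence on both sides---is the standard route and matches the argument in Bogdan--Byczkowski. The only place worth a line of extra care is the Fubini justification for the symmetrized double integral: you should note explicitly that when $x\notin\supp\varphi$ the inner integral in $y$ is supported in $\supp\varphi$ and decays like $\dist(x,\supp\varphi)^{-N-2s}$, so that after multiplying by $|u(x)|$ the $x$-integral is controlled by $\|u\|_{\cL^1_s}$; your current phrasing (``thanks to the cutoff and $u\in\cL^1_s$'') gestures at this but does not spell it out.
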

In the situation of this lemma, $\Ds u$ is a priori only well defined
a.e. in $\O$ as a regular distribution. Nevertheless, under these
hypotheses, we may assume in the following that $\Ds u$ is defined
pointwise by (\ref{eq:10}) in all of $\O$.

\begin{Lemma}\label{lem:mmpC2}
Let $\O \subset \R^N$ be open, and suppose that $u\in  \cL^1_s\cap
C^2(\O)$ satisfies $\Ds u \le 0 $ in $\O$. Suppose furthermore that
there exists $x_0 \in \Omega$ such that $u(x_0) \ge u(y)$ for a.e. $y
\in \R^N$. Then $u(y)= u(x_0)$ for every $y \in \Omega$ and a.e. $y
\in \R^N \setminus \Omega$.
\end{Lemma}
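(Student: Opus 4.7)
The plan is to exploit the pointwise representation of $\Ds u(x_0)$ from Lemma~\ref{lem:discont} together with the sign of the integrand. First I would observe that, by hypothesis, $u(x_0)-u(y)\ge 0$ for a.e.\ $y\in\R^N$, so the integrand $(u(x_0)-u(y))/|x_0-y|^{N+2s}$ is a.e.\ nonnegative. Moreover, because $u\in C^2(\O)$ near $x_0$ and the inequality $u(x_0)\ge u(y)$ propagates to every $y$ in a neighborhood of $x_0$ by continuity, $x_0$ is a local maximum, so $\nabla u(x_0)=0$ and $u(x_0)-u(y)=O(|y-x_0|^2)$ near $x_0$; this ensures that the truncated integrals near $x_0$ are well behaved.

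Next, for each $\e>0$ set
\[
I_\e:=\int_{|x_0-y|>\e}\frac{u(x_0)-u(y)}{|x_0-y|^{N+2s}}\,dy.
\]
Using $u\in\cL^1_s$ for the behavior at infinity and the local boundedness of $u$ away from the cutoff, $I_\e$ is a well-defined Lebesgue integral, and since its integrand is a.e.\ nonnegative one has $I_\e\ge 0$. As $\e$ decreases, the integration domain grows while the integrand stays a.e.\ nonnegative, so $\e\mapsto I_\e$ is monotone nondecreasing as $\e\to 0^+$. By Lemma~\ref{lem:discont}, $\lim_{\e\to 0}I_\e=\Ds u(x_0)/a_{N,s}$, which is $\le 0$ by assumption. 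Combining these facts forces $\Ds u(x_0)=0$ and, by monotonicity, $I_\e=0$ for every $\e>0$.

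Since a nonnegative function with vanishing integral must vanish a.e., this yields $u(y)=u(x_0)$ for a.e.\ $y$ with $|x_0-y|>\e$; letting $\e\to 0$ gives $u=u(x_0)$ a.e.\ in $\R^N$. The continuity of $u$ on $\O$ then upgrades this to $u(y)=u(x_0)$ for every $y\in\O$, while the conclusion on $\R^N\setminus\O$ remains in the a.e.\ sense.

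The main subtlety is the interchange of the pointwise inequality $u(x_0)\ge u$ (which holds only a.e.) with the principal value limit defining $\Ds u(x_0)$. The key observations that resolve this are (i) the a.e.\ nonnegativity of the integrand allows each truncated integral $I_\e$ to be treated as a Lebesgue integral with a definite sign, and (ii) the monotonicity of $I_\e$ in $\e$ converts the conclusion $\lim I_\e\le 0$ into $I_\e\equiv 0$, bypassing any delicate principal-value cancellations. Everything else is continuity and a standard ``nonnegative integrand with zero integral'' argument.
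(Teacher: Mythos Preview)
Your proof is correct and follows essentially the same approach as the paper: define the truncated integrals $I_\e$, note they are nonnegative and monotone in $\e$ because the integrand is a.e.\ nonnegative, use Lemma~\ref{lem:discont} to conclude the limit is $\le 0$, hence each $I_\e=0$, and finish by the ``nonnegative integrand with zero integral'' argument together with continuity on $\O$. The paper's proof is more terse but identical in substance; your extra remarks about $\nabla u(x_0)=0$ and the $O(|y-x_0|^2)$ behavior are not needed (well-definedness of $I_\e$ for $\e>0$ and the existence of the limit are already supplied by $u\in\cL^1_s$ and Lemma~\ref{lem:discont}), but they do no harm.
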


\begin{proof}
Let $x_0\in\O$ satisfy $u(x_0)\geq u(y)$ for a.e. $y\in\R^N$. Then the
function
$$
\eps \to h(\eps):= \int_{|x-y|>\eps}
\frac{u(x_0)-u(y)}{|x_0-y|^{N+2s}}\,dy
$$
is nonnegative and nonincreasing in $(0,\infty)$, whereas
$\lim \limits_{\eps \to 0}h(\eps) \le 0$ by assumption and (\ref{eq:10}).
Hence $h \equiv 0$ in $(0,\infty)$, which, since $u \in C^2(\Omega)$,
shows that $u(y) = u(x_0)$ for every $y \in \Omega$ and a.e. $y \in
\R^N \setminus \Omega$.
\end{proof}

Next we consider the Poisson kernel of $B_R$ (see \cite{BGR}) which is given by
\begin{equation}\label{eq:Poisson-ker-BR}
\G_R(x,y)=C_{N,s}\left(\frac{R^2-|x|^2}{|y|^2-R^2} \right)^s|x-y|^{-N},\qquad |y|>R,\,|x|<R
\end{equation}
and $\G_R(x,y)=0 $ elsewhere. The constant $C_{N,s}$ is chosen such
that
$$
\int_{\R^N}\G_R(x,y)\,dy = \int_{\R^N \setminus B_R} \G_R(x,y)\,dy =
1 \qquad \text{for every $x \in B_R$.}
$$

\begin{lemma}
\label{sec:gener-prop-distr}
Let $R>0$, $g \in \cL^1_s$, and suppose that $g$ is bounded in a
neighborhood of $B_R$. Then the problem
\begin{equation}
  \label{eq:11} \left \{
  \begin{aligned}
  \Ds u &= 0  &&\qquad \text{in $B_R$,}\\
  u&=g && \qquad \text{in $\R^N \setminus B_R$}
  \end{aligned}
\right.
\end{equation}
has a unique solution $u \in \cL^1_s \cap C^2(B_R) \cap L^\infty(B_R)$ given by
\begin{equation}
  \label{eq:12}
 u(x)=\int_{\R^N\setminus B_R}\G_R(x,y)g(y)dy\qquad
 \text{for $x\in B_R$.}
\end{equation}
Moreover, $u \in C^\infty(B_R)$.
\end{lemma}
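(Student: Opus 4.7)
The plan is to treat uniqueness and existence separately. Uniqueness is immediate from Lemma~\ref{lem:mmpC2}: if $u_1,u_2$ are two such solutions, the difference $v := u_1 - u_2$ lies in $\cL^1_s \cap C^2(B_R)$, satisfies $\Ds v = 0$ in $B_R$, and vanishes identically on $\R^N \setminus B_R$; applying Lemma~\ref{lem:mmpC2} once to $v$ and once to $-v$ (each of which attains its global supremum $0$ outside $B_R$) forces $v \equiv 0$. For existence, I would define $u$ by the formula \eqref{eq:12} on $B_R$ and by $u=g$ on $\R^N \setminus B_R$, and verify each asserted property.

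For boundedness, smoothness, and membership in $\cL^1_s$, the key pointwise estimate is $\G_R(x,y) \le C(R^2-|x|^2)^s |y|^{-N-2s}$ for $x \in B_R$ and $|y| \ge 2R$, which is immediate from \eqref{eq:Poisson-ker-BR}. Splitting the defining integral at $|y| = 2R$, the inner part is controlled by $\|g\|_{L^\infty(B_{2R} \setminus B_R)}$ times $\int_{\R^N \setminus B_R} \G_R(x, y)\,dy = 1$, while the outer part is controlled by $C(R^2 - |x|^2)^s \int_{|y| \ge 2R} |g(y)|/|y|^{N+2s}\,dy$, which is finite by the $\cL^1_s$ hypothesis. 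This yields $u \in L^\infty(B_R)$, and since $u = g \in \cL^1_s$ outside, also $u \in \cL^1_s$. Smoothness $u \in C^\infty(B_R)$ follows from differentiation under the integral: on compact subsets of $B_R$, every derivative $\partial_x^\alpha \G_R(x,y)$ is dominated by the same integrable envelope, so repeated differentiation is legitimate.

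To verify $\Ds u = 0$ in $B_R$, I would fix $\phi \in C_c^\infty(B_R)$ and use Fubini (justified by the envelope above together with the decay $|\Ds \phi(y)| \le \kappa/(1+|y|^{N+2s})$ from Lemma~\ref{lem:bondDsvp}) to rewrite
\[
\int_{\R^N} u\, \Ds \phi\,dx \;=\; \int_{\R^N \setminus B_R} g(y) \left[ \Ds \phi(y) + \int_{B_R} \G_R(x,y) \Ds\phi(x)\,dx \right] dy.
\]
The task then reduces to showing that the bracket vanishes for every $y \in \R^N \setminus \overline{B_R}$; this is the defining distributional identity of the Poisson kernel (see \cite{BGR}), which I would derive by a symmetric testing argument: pair with an auxiliary $h \in C_c(\R^N \setminus \overline{B_R})$, form the corresponding Poisson integral $w_h$ to which the classical theory applies directly (since $h$ is bounded with compact support, $w_h$ is known to satisfy $\Ds w_h = 0$ in $B_R$), and then invert the order of integration to transfer the identity. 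Once the distributional identity is in hand, Lemma~\ref{lem:discont} combined with $u \in C^2(B_R) \cap \cL^1_s$ upgrades it to the pointwise equation $\Ds u(x) = 0$ at every $x \in B_R$.

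The main obstacle is the middle step: carefully justifying each application of Fubini under only the weak $\cL^1_s$-decay of $g$, and rigorously producing the Poisson identity for every (not just almost every) $y$ outside the closed ball. All remaining ingredients, including the $C^\infty$-regularity and the final upgrade to a pointwise equation, are routine once the explicit decay of $\G_R$ has been extracted from \eqref{eq:Poisson-ker-BR}.
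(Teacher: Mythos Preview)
Your existence argument is essentially the paper's, just with more detail: the paper simply asserts that the explicit formula \eqref{eq:12} yields $u \in \cL^1_s \cap C^\infty(B_R) \cap L^\infty(B_R)$ with $\Ds u = 0$ pointwise in $B_R$ (taking the $s$-harmonicity of the Poisson integral as a known fact from \cite{BGR}), while you unpack the decay and smoothness and propose to re-derive the harmonicity via a Fubini/duality computation. That is fine, if somewhat roundabout.

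The gap is in your uniqueness argument. You want to apply Lemma~\ref{lem:mmpC2} to $v = u_1 - u_2$, asserting that $v$ ``attains its global supremum $0$ outside $B_R$''. But that is precisely what has to be shown: a priori you only know $v \in C^2(B_R) \cap L^\infty(B_R)$ with $v \equiv 0$ on $\R^N \setminus B_R$, and there is no continuity of $v$ across $\partial B_R$ in the hypotheses. If $\sup_{B_R} v > 0$, the supremum could be approached along a sequence $x_n \to \partial B_R$ without ever being attained at an interior point, and then Lemma~\ref{lem:mmpC2}, which explicitly requires a maximizer $x_0 \in \Omega$, gives nothing.

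The paper circumvents exactly this obstacle by a shrinking-ball trick. Given any solution $u$, for each $r \in (0,R)$ it forms the Poisson integral $v_r$ of $u|_{\R^N \setminus B_r}$ on the smaller ball $B_r$. Because $u$ is continuous on $B_R$, one checks that $u - v_r$ is continuous on all of $\R^N$ and vanishes on $\R^N \setminus B_r$; hence its maximum over the compact set $\overline{B_r}$ is genuinely attained, and if positive must lie in the open ball $B_r$, where Lemma~\ref{lem:mmpC2} now legitimately forces $u \equiv v_r$. Letting $r \to R^-$ (using that $u$ is bounded near $\partial B_R$) then recovers \eqref{eq:12}. Your argument is missing this step; to repair it you should either insert the same $r \to R^-$ device or first establish continuity of solutions up to $\partial B_R$, which is not free.
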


\begin{proof}
Let $u: \R^N \to \R$ be defined by $u \equiv g$ in $\R^N \setminus
B_R$ and by (\ref{eq:12}) in $B_R$. Using the explicit representation
(\ref{eq:Poisson-ker-BR}) and the assumption that $g \in \cL^1_s$ is
bounded in a neighborhood of $B_R$, it is easy to see that $u \in
\cL^1_s \cap C^\infty(B_R) \cap L^\infty(B_R)$, and that
$$
\Ds u(x)=
a_{N,s}\lim_{\e\to0}\int_{|x-y|>\e}\frac{u(x)-u(y)}{|x-y|^{N+2s}}dy=0\qquad
\text{for all $x\in B_R.$}
$$
Conversely, let $u\in \cL^1_s \cap C^2(B_R) \cap L^\infty(B_R)$ satisfy
(\ref{eq:11}). Let $r \in (0,R)$ and define $v_r \in C^2(B_r) \cap
L^\infty(\R^N)$ by $v_r \equiv u$ in $\R^N \setminus B_r$ and
\begin{equation}
  \label{eq:13}
v_r(x)= \int_{\R^N\setminus B_r}\G_r(x,y)u(y)dy\qquad
 \text{for $x\in B_r$.}
\end{equation}
Since $u$ is continuous in $B_R$, it is not difficult to see from (\ref{eq:13}) that $v_r \in
C(B_R)$. Applying Lemma~\ref{lem:mmpC2} to $\Omega=B_r$ and the
functions $u-v_r$, $v_r-u$ which are continuous on $\R^N$ and
vanish on $\R^N \setminus B_r$, we infer that $u \equiv v_r$. Passing to
the limit $r \mapsto R^-$ and using the fact that $u$ is bounded in a
neighborhood of $B_R$, we conclude that
$$
u(x)= \lim_{r \mapsto R^-} \int_{\R^N\setminus B_r}\G_r(x,y)u(y)dy=
\int_{\R^N\setminus B_R}\G_R(x,y) u(y) dy = \int_{\R^N\setminus B_R}\G_R(x,y) g(y) dy,
$$
as claimed.
\end{proof}

Next, we wish to remove the $C^2$-assumption in
Lemma~\ref{lem:mmpC2}. For this we consider the regularization of
$\G_R$ as defined in \cite{BB-Schr}. Let $\chi\in C^\infty_c(1/2,1)$ such that $\int_{1/2}^1\chi(r)dr=1$ and define
$$
\tilde{\G}: \R^N \to \R, \qquad \tilde{\G}(y)=\int_{1/2}^1\chi(r)\G_r(0,y)dr.
$$
We have $\tilde{\G}\in C^\infty(\R^N) $ and
\be\label{eq:estC2tG}
| \tilde{\G}(y)| \leq \frac{C_{N,s}}{1+|y|^{N+2+2s}} \qquad
\text{for $y \in \R^N$}
\ee
with a constant $C_{N,s}$, see for instance \cite[Lemma 3.11]{BB-Schr}. For $\eps>0$, we define
$$
\tilde{\G}_\e: \R^N \to \R, \qquad
\tilde{\G}_\e(x)=\e^{-N}\tilde{\G}(x/\e).
$$
The following result is given in \cite[Theorem 3.12]{BB-Schr} for the
case $N \ge 2$, but the same proof also gives the result for $N=1$. For the convenience of the reader, we
include the proof here.

\begin{Theorem}\label{thm:harm-dist}
Let $\Omega \subset \R^N$ be open, and let
$u\in \cL^1_s$ satisfy $\Ds u=0$ in $\O$, i.e.,
\be
\label{eq:vindistrD} \int_{\R^N}u\Ds \psi\, dx=0\qquad \text{for all
$\psi\in C^\infty_c(\O)$.}
\ee Then for every $\eps>0$ we have
\be
\label{eq:uharmsmae}
u \equiv \tilde{\G}_\e*u\quad \textrm{a.e. in $\O_\e$,}
\ee
where $\O_\e:=\{x\in \O\,:\,dist(x,\de \O)>\e\}$. In particular, $u$ is equivalent to a $C^\infty$-function in $\O$.
\end{Theorem}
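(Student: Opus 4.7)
The strategy is to first smooth $u$ by convolution to reduce to the classical setting, then apply Lemma~\ref{sec:gener-prop-distr} to obtain a Poisson representation on a family of nested balls, average over the radius with the weight $\chi$ to produce the kernel $\tilde G_\e$, and finally pass to the limit in the regularization.

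Let $\rh_\d\in C^\infty_c(B_\d)$ be a standard radial mollifier with $\int\rh_\d=1$, and set $u_\d:=\rh_\d*u$. Since $u\in\cL^1_s\subset L^1_{loc}(\R^N)$, we have $u_\d\in C^\infty(\R^N)\cap L^\infty_{loc}(\R^N)$, and a Fubini argument against the weight $(1+|x|)^{-(N+2s)}$ shows $u_\d\in\cL^1_s$. For every $\psi\in C^\infty_c(\O_\d)$ the function $\rh_\d*\psi$ lies in $C^\infty_c(\O)$ and, since $\Ds$ commutes with convolution against $\rh_\d$ on test functions, transposition in (\ref{eq:vindistrD}) gives $\int_{\R^N} u_\d\,\Ds\psi\,dx=\int_{\R^N} u\,\Ds(\rh_\d*\psi)\,dx=0$. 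Therefore $\Ds u_\d=0$ distributionally on $\O_\d$, hence pointwise by Lemma~\ref{lem:discont}.

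Now fix $\e>0$, $x\in\O_\e$, and choose $\d\in(0,\dist(x,\de\O)-\e)$ so that $\overline{B_{\e r}(x)}\subset\O_\d$ for every $r\in[\tfrac12,1]$. Since $u_\d\in\cL^1_s\cap C^\infty\cap L^\infty_{loc}$, Lemma~\ref{sec:gener-prop-distr} applied (after translation to the origin) on the ball $B_{\e r}(x)$ yields
\be
u_\d(x)=\int_{\R^N}G_{\e r}(0,y-x)\,u_\d(y)\,dy
\ee
for every $r\in[\tfrac12,1]$. Using the scaling identity $G_{\e r}(0,z)=\e^{-N}G_r(0,z/\e)$, multiplying by $\chi(r)$, integrating over $(1/2,1)$, swapping the order of integration (justified by the explicit decay $G_r(0,z)\lesssim(1+|z|)^{-(N+2s)}$ and $u_\d\in\cL^1_s$), and invoking the radial symmetry of $\tilde G$, we arrive at
\be
\label{eq:plan-conv}
u_\d(x)=\int_{\R^N}\tilde G_\e(x-y)\,u_\d(y)\,dy=(\tilde G_\e*u_\d)(x).
\ee

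It remains to let $\d\to 0$. On the left, $u_\d\to u$ pointwise a.e.\ in $\R^N$ by the Lebesgue differentiation theorem. On the right, the sharp decay (\ref{eq:estC2tG}) gives, uniformly for $x$ in a compact set $K\subset\O_\e$ and for $\d$ small,
$$
|\tilde G_\e(x-y)|\le C(N,s,\e,K)\,(1+|y|)^{-(N+2+2s)},
$$
which is integrable against $|u|$ thanks to $u\in\cL^1_s$; a further Fubini argument produces the same domination for $\tilde G_\e*u_\d$ uniformly in $\d$. Dominated convergence then yields $(\tilde G_\e*u_\d)(x)\to(\tilde G_\e*u)(x)$ pointwise on $\O_\e$, and passing to the limit in (\ref{eq:plan-conv}) establishes (\ref{eq:uharmsmae}) for a.e.\ $x\in\O_\e$. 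Because $\tilde G_\e\in C^\infty(\R^N)$ with rapid decay and $u\in\cL^1_s$, the function $\tilde G_\e*u$ is $C^\infty$ on $\R^N$, so $u$ coincides a.e.\ on $\O_\e$ with a smooth function; varying $\e$ gives smoothness throughout $\O$. The principal difficulty lies in the absence of any a priori $L^\infty$ bound on $u$: every integrability step, and in particular the limit $\d\to 0$ in the convolution, must be controlled purely by matching the sharp kernel decay (\ref{eq:estC2tG}) against the $\cL^1_s$-weight.
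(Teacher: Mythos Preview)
Your proof is correct and follows essentially the same route as the paper's: mollify $u$, show the mollified function is $s$-harmonic on $\O_\d$, establish the averaged Poisson identity for the smooth function, and pass to the limit using the decay \eqref{eq:estC2tG}. One notational slip: the kernel appearing in your Poisson representation and in the definition of $\tilde\G$ is the \emph{Poisson} kernel $\G_R$ of \eqref{eq:Poisson-ker-BR}, not the Green function $G_R$; with that correction your scaling identity and the rest of the argument go through unchanged.
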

\begin{proof}
We first remark that, by
Lemma~\ref{sec:gener-prop-distr} and Fubini's theorem, the equality
\eqref{eq:uharmsmae} holds under the additional assumption $u \in
C^2(\O)$. Let $\rho_n \in C^\infty_c(B_{\frac{1}{n}})$ denote the standard
  radially symmetric  mollifier for $n \in \N$. For every $\psi\in C^\infty_c(\O_{\frac{1}{n}})$, we then have
$\rho_n*\Ds \psi =\Ds [\rho_n * \psi]$ in $\R^N$ and therefore, by Fubini's
theorem and \eqref{eq:vindistrD},
$$
\int_{\R^N}[\rho_n*u]\, \Ds \psi \,dx= \int_{\R^N} u\, [\rho_n*\Ds
\psi]\,dy=
  \int_{\R^N}u\, \Ds[ \rho_n*\psi]\,dy=0.
$$
Letting $u_n=\rho_n*u \in C^\infty(\R^N)$ for $n \in \N$, we deduce that $\Ds u_n \equiv
0$ in $\O_{\frac{1}{n}}.$ By the remark above, we thus have $u_n \equiv
\tilde{\G}_\e*u_n$ in $\O_{\e+\frac{1}{n}}$. By \eqref{eq:estC2tG}, we
now may pass to the limit $n \to \infty$ to get
$u \equiv \tilde{\G}_\e*u$ a.e. in $\O_{\e}$, as claimed.
\end{proof}

\begin{Corollary}\label{lem:mmp}
 Let $\Omega \subset \R^N$ be open and
$u\in  \cL^1_s\cap C(\O)$ such that $\Ds u=0 $ in $\O$. Then if $u$
attains it's maximum or it's minimum in $\O$ it is constant.
\end{Corollary}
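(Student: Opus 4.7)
The plan is to reduce the statement to Lemma \ref{lem:mmpC2}, whose hypotheses include $u \in C^2(\Omega)$, a regularity not postulated here. The only substantive step is therefore to upgrade $u$ from $C(\Omega)$ to $C^2(\Omega)$ (in fact to $C^\infty(\Omega)$) using the distributional $s$-harmonicity; once this is done, the conclusion follows essentially for free.

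For the regularity upgrade I would invoke Theorem \ref{thm:harm-dist}: since $u \in \cL^1_s$ and $\Ds u = 0$ in $\Omega$ in the sense of Definition \ref{def:fraclap-non-loc}, $u$ coincides a.e.\ in $\Omega$ with a $C^\infty$ function. Because $u$ is already assumed continuous in $\Omega$, the two representatives must agree pointwise there, so $u \in \cL^1_s \cap C^\infty(\Omega)$. In particular, by Lemma \ref{lem:discont}, the identity $\Ds u(x) = 0$ holds pointwise in $\Omega$ via the principal-value integral \eqref{eq:10}.

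Assume now that $u$ attains its maximum at some $x_0 \in \Omega$; the natural reading, consistent with the fact that $u$ is only defined up to an a.e.\ equivalence outside $\Omega$, is that $u(x_0) \ge u(y)$ for a.e.\ $y \in \R^N$. Since $u \in \cL^1_s \cap C^2(\Omega)$ and $\Ds u \le 0$ in $\Omega$ (trivially, as $\Ds u = 0$), Lemma \ref{lem:mmpC2} applies directly with this $x_0$ and yields $u(y) = u(x_0)$ for every $y \in \Omega$ and a.e.\ $y \in \R^N \setminus \Omega$, which is the claim.

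For the minimum case I would apply the preceding argument to $-u \in \cL^1_s \cap C(\Omega)$: linearity of $\Ds$ gives $\Ds(-u) = 0$ in $\Omega$, and a point $x_0 \in \Omega$ realizing the (essential) minimum of $u$ realizes the (essential) maximum of $-u$, so the same appeal to Theorem \ref{thm:harm-dist} and Lemma \ref{lem:mmpC2} concludes. I do not anticipate any real obstacle; the proof is a short assembly of previously established results, with the only mildly delicate point being the correct interpretation of ``attains its maximum in $\Omega$'' in the a.e.\ sense needed to feed into Lemma \ref{lem:mmpC2}.
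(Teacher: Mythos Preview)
Your proposal is correct and follows essentially the same route as the paper: invoke Theorem~\ref{thm:harm-dist} (together with the continuity of $u$) to upgrade to $u\in C^\infty(\Omega)$, and then apply Lemma~\ref{lem:mmpC2}. The paper's proof is a single sentence to this effect; your additional remarks on the a.e.\ interpretation of the maximum and the treatment of the minimum via $-u$ are sound elaborations of what the paper leaves implicit.
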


\begin{proof}
By Theorem \ref{thm:harm-dist} and the continuity of $u$, we have $u
\in C^\infty(\O)$, so the result follows from Lemma~\ref{lem:mmpC2}.
\end{proof}

We finally obtain the following result which improves
Lemma~\ref{sec:gener-prop-distr}.

\begin{lemma}
\label{sec:gener-prop-distr-b}
Let $R,\delta>0$, and let $u \in \cL^1_s \cap
L^\infty(B_{R+\delta})$ satisfy $\Ds u = 0$ in $B_R$. Then
$$ u(x)=\int_{\R^N\setminus B_R}\G_R(x,y)u(y)dy\qquad
 \text{for a.e. $x\in B_R$.}
$$
Moreover, $u$ is equivalent to a $C^\infty$-function in $B_R$.
\end{lemma}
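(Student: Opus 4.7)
The statement follows by combining Theorem~\ref{thm:harm-dist} with the uniqueness part of Lemma~\ref{sec:gener-prop-distr}. First, I would apply Theorem~\ref{thm:harm-dist} with $\O = B_R$: since $u \in \cL^1_s$ satisfies $\Ds u = 0$ distributionally on $B_R$, the theorem yields a $C^\infty$ representative on $B_R$. Altering $u$ on the corresponding null set inside $B_R$ changes neither its $\cL^1_s$-class nor the distributional equation it satisfies, so I may assume from the outset that $u \in C^\infty(B_R) \cap \cL^1_s$; this already establishes the $C^\infty$ regularity claim.

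Next, I would upgrade the boundedness to a pointwise bound on all of $B_R$. The hypothesis $u \in L^\infty(B_{R+\delta})$ supplies an essential-supremum bound on $B_R$, and by continuity of the smooth representative this bound must in fact hold pointwise, so $u \in C^\infty(B_R) \cap L^\infty(B_R) \cap \cL^1_s$. At this stage, Lemma~\ref{lem:discont} promotes the distributional identity $\Ds u = 0$ on $B_R$ to a pointwise identity there.

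Finally, I would invoke the uniqueness statement of Lemma~\ref{sec:gener-prop-distr} with Dirichlet data $g := u$. The two hypotheses required of $g$, namely $g \in \cL^1_s$ and $g$ bounded in a neighborhood of $B_R$, are furnished directly by $u \in \cL^1_s$ and $u \in L^\infty(B_{R+\delta})$, with $B_{R+\delta}$ serving as the required neighborhood of $\overline{B_R}$. Since the unique element of $\cL^1_s \cap C^2(B_R) \cap L^\infty(B_R)$ solving \eqref{eq:11} is the Poisson integral \eqref{eq:12}, $u$ must coincide with it on $B_R$, yielding the desired representation formula. The argument is essentially bookkeeping of hypotheses; I do not anticipate a genuine obstacle, the only mildly delicate point being the verification that a modification on a null set inside $B_R$ preserves both $\cL^1_s$-membership and the distributional equation -- and both preservations are immediate from the definitions.
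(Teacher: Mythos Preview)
Your proposal is correct and follows essentially the same route as the paper: apply Theorem~\ref{thm:harm-dist} to upgrade $u$ to a $C^\infty$-function in $B_R$, then invoke the uniqueness assertion of Lemma~\ref{sec:gener-prop-distr} with $g=u$. The paper's proof records only these two steps, while you have spelled out the intermediate bookkeeping (pointwise boundedness of the smooth representative, verification of the hypotheses on $g$), all of which is accurate.
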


\begin{proof}
By Theorem \ref{thm:harm-dist} we may assume that $u \in
C^\infty(B_R)$. Hence the result follows from Lemma~\ref{sec:gener-prop-distr}.
\end{proof}

Next we consider the Green function associated with $\Ds$ and the unit
ball $\B$, which was computed
by Blumenthal, Getoor and  Ray  in \cite{BGR}. It is given by
\begin{align*}
G_1(x,y) &= k_N^s |x-y|^{2s-N} \int_1^{(\psi(x,y)+1)^{1/2}} \frac{(z^2-1)^{s-1}}{z^{N-1}}\,dz \\
&= \frac{k_N^s}{2}|x-y|^{2s-N}\int_0^{\psi(x,y)}
\frac{z^{s-1}}{(z+1)^{N/2}}\,dz \; \mbox{ with } \;
\psi(x,y)=\frac{(1-|x|^2)(1-|y|^2)}{|x-y|^2}
\end{align*}
for $x,y\in \B$ and $G(x,y)=0$ if $x \not \in \B$ or $y \not \in B$.
Here the normalization constant is given by $k_N^s=\pi^{-(N/2+1)}\Gamma(N/2)\sin(\pi s)$, see \cite{BGR}. If $N=1=2s$, then direct
computations give
$$
\int_0^{\psi(x,y)}
\frac{z^{-1/2}}{(z+1)^{1/2}}\,dz=2\log\frac{ 1-xy+ (1-x^2)^{1/2}(1-y^2)^{1/2}}{|x-y|}
$$
and yet, see \cite{BGR}, in this case
\be\label{eq:G-est-1d}
G_1(x,y)=\frac{1}{\pi} \log\frac{ 1-xy+ (1-x^2)^{1/2}(1-y^2)^{1/2}}{|x-y|}.
\ee

The explicit form of $G_1$ gives rise to the following estimates for
$x,y \in B$:
\be\label{eq:G-est}
G_{\O}(x,y)\leq
\begin{cases}
  C |x-y|^{2s-N}\min\left( \frac{ d^s(x) d^s (y)}{|x-y|^{2s}}, 1\right)  \qquad\textrm{ if } N>2s;\\
C  \min \left(\frac{d^{1/2}(x)d^{1/2}(y)}{|x-y|},\log \frac{3}{|x-y|}\right) \qquad\textrm{ if } N=1=2s;\\
C  |x-y|^{2s-1}\min\left(\frac{(d(x)d(y))^{(2s-1)/2}}{|x-y|^{2s-1}}, \frac{ d^s(x) d^s (y)}{|x-y|^{2s}}  \right) \qquad\textrm{ if } N=1<2s.
\end{cases}
\ee Here $z \mapsto d(z)=1-|z|$ is the distance function to
$\R^N\setminus B$, and $C$ is a constant depending on $N$ and $s$.
Similar estimates are available for Greens functions in general
$C^{1,1}$-domains, see e.g. \cite{K,Chen-Song}. By dilation, the
Green function for the ball $B_R= \{x \in \R^N\::\: |x| <R\}$, $R>0$
is given by
\begin{equation}
  \label{eq:14}
G_R(x,y) = R^{2s-N}G_1\left(\frac{x}{R},\frac{y}{R}\right)
=\frac{k_N^s}{2}|x-y|^{2s-N}\int_0^{\psi_R(x,y)}
\frac{z^{s-1}}{(z+1)^{N/2}}\,dz
\end{equation}
with $\psi_R(x,y)=\frac{(R^2-|x|^2)(R^2-|y|^2)}{R^2|x-y|^2}$. In the next section, we will need the following general Green-Poisson
representation formula.

\begin{Corollary}\label{cor:represeBR+}
Let $R,\delta>0$ and $f\in  L^\infty(B_R)$. Moreover, let $v \in
\cL^1_s \cap L^\infty(B_{R+\delta})$ satisfy $\Ds v=f$ in $B_R$. Then
\begin{equation}
\label{eq:15-1}
v(x)=  \int_{\R^N\setminus B_R}\G_R(x,y)v(y)dy +\int_{B_R} G_R(x,y)
f(y)\,dy \quad \textrm{ for a.e. $x \in B_R$}.
\end{equation}
\end{Corollary}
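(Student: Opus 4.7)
The plan is to reduce (\ref{eq:15-1}) to the $s$-harmonic Poisson representation of Lemma~\ref{sec:gener-prop-distr-b} by subtracting off the Green potential of $f$ and applying the lemma to the resulting $s$-harmonic remainder.

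First, I would introduce the Green potential
$$
w(x) := \int_{B_R} G_R(x,y)\, f(y)\, dy, \qquad x \in \R^N,
$$
which is automatically supported in $\overline{B_R}$ since $G_R(x,y)=0$ whenever $x$ or $y$ lies outside $B_R$. Using the explicit bounds (\ref{eq:G-est}) (transferred to $B_R$ via the dilation identity (\ref{eq:14})), a direct integration yields $w \in L^\infty(\R^N)$ with an estimate of the form $\|w\|_\infty \le C(N,s,R)\|f\|_{L^\infty(B_R)}$; in particular $w \in \cL^1_s \cap L^\infty(B_{R+\delta})$.

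The heart of the argument is to check that $\Ds w = f$ in $B_R$ in the sense of Definition~\ref{def:fraclap-non-loc}. For any test function $\varphi \in C^\infty_c(B_R)$, Fubini's theorem (justified by $f \in L^\infty(B_R)$, the pointwise decay (\ref{sec:introduction-4}) of $\Ds\varphi$, and the local integrability of $G_R(\cdot,y)$) gives
$$
\int_{\R^N} w(x)\,\Ds \varphi(x)\, dx = \int_{B_R} f(y) \left( \int_{\R^N} G_R(x,y)\,\Ds \varphi(x)\, dx\right) dy.
$$
The inner integral equals $\varphi(y)$ by the defining property of the Dirichlet Green function of $\Ds$ on $B_R$, namely the weak identity $\Ds G_R(\cdot,y) = \delta_y$ in $B_R$ with zero exterior data, which is the content of the classical computation in \cite{BGR}. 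Hence $\int_{\R^N} w\,\Ds\varphi\, dx = \int_{B_R} f\varphi\, dx$, i.e., $\Ds w = f$ distributionally in $B_R$.

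With this identity in hand, the difference $u := v - w$ lies in $\cL^1_s \cap L^\infty(B_{R+\delta})$ and satisfies $\Ds u = 0$ in $B_R$ distributionally. Lemma~\ref{sec:gener-prop-distr-b} applied to $u$ then yields, for a.e. $x \in B_R$,
$$
v(x) - w(x) = \int_{\R^N \setminus B_R} \G_R(x,y)\, u(y)\, dy = \int_{\R^N \setminus B_R} \G_R(x,y)\, v(y)\, dy,
$$
where the last equality uses $w \equiv 0$ on $\R^N \setminus B_R$. Rearranging reproduces (\ref{eq:15-1}). The only genuinely non-routine ingredient is the identification of $G_R(\cdot,y)$ as the distributional fundamental solution of $\Ds$ with zero exterior data; this is the main obstacle, and it is handled by invoking \cite{BGR} rather than being reproved here. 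Everything else is bookkeeping built on the Poisson representation already secured in Lemma~\ref{sec:gener-prop-distr-b}.
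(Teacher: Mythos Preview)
Your proposal is correct and follows essentially the same route as the paper: define the Green potential $w$ of $f$, verify via the identity $\varphi(y)=\int G_R(x,y)\,\Ds\varphi(x)\,dx$ and Fubini that $\Ds w=f$ distributionally in $B_R$, subtract to obtain an $s$-harmonic remainder, and invoke Lemma~\ref{sec:gener-prop-distr-b}. The paper's proof differs only cosmetically (it normalizes to $R=1$ and names the auxiliary functions differently).
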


\begin{proof}
Without loss of generality, we may assume that $R=1$. Consider
$$
w_0: \R^N \to \R,\qquad  w_0(x)=
\int_{\B}G_1(x,y)f(y)dy.
$$
The estimates~\eqref{eq:G-est} imply that
$w_0 \in L^\infty(\R^N)$. Moreover, $\Ds w_0 = f$ in $\B$ in
distributional sense, since for any $\phi \in C^\infty_c(\B)$ we have
$$
\phi(x) = \int_{\B}G_1(x,y)[\Ds \phi](y) dy \qquad \text{for $x \in \B$.}
$$
We now consider $w := v-w_0  \in \cL^1_s \cap
L^\infty(B_{R+\delta})$. Clearly  $\Ds w=0$
 in $B_R$, and $w \equiv v$ on $\R^N\setminus B_R$. By
 Lemma~\ref{sec:gener-prop-distr-b}, we have
$w(x)=\int_{\R^N\setminus B_R}\G_R(x,y)v(y)dy$ for a.e. $x \in
B_R$, and thus (\ref{eq:15-1}) follows.
\end{proof}

We finally add the following boundary estimate.

\begin{lemma}
\label{sec:preliminaries-1}
Let $f \in L^\infty(\B)$ and consider
$$
v: \R^N \to \R,\qquad  v(x)=
\int_{\B}G_1(x,y)f(y)dy.
$$
Then there exists a constant $C=C(N,s)>0$ such that for $x \in \B$ we have
\begin{equation}
\label{eq:7}
v(x) \le C \left\{
\begin{aligned}
&(1-|x|)^s\, \|f\|_{L^\infty} &&\qquad \text{if $2s \le N$;}\\
&(1-|x|)^{s-\frac{1}{2}}\,\|f\|_{L^\infty} &&\qquad \text{if
  $N=1<2s$.}
\end{aligned}
\right.
\end{equation}
\end{lemma}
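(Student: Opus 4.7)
The plan is to reduce the estimate to the classical formula for the torsion function $W(x) := \int_{\B}G_1(x,y)\,dy$ of $\B$. Since $G_1 \ge 0$ and $f \le \|f\|_{L^\infty}$, one has immediately
$$v(x) \le \|f\|_{L^\infty}\,W(x) \qquad \text{for a.e.\ } x \in \B,$$
so it is enough to bound $W$ by the right-hand side of \eqref{eq:7}.

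For the case $2s \le N$, I will invoke the Riesz--Getoor identity
$$\Ds\bigl((1-|x|^2)^s_+\bigr) = \lambda_{N,s} \qquad \text{on } \B,$$
valid for every $N \ge 1$ and $s \in (0,1)$ with some positive constant $\lambda_{N,s}$. As observed in the proof of Corollary~\ref{cor:represeBR+}, the Green potential $W$ satisfies $\Ds W = 1$ in $\B$ and vanishes outside $\B$; the uniqueness of solutions to $\Ds u = \lambda_{N,s}$ in $\B$ with zero exterior data, which is a direct consequence of the representation formula in Corollary~\ref{cor:represeBR+}, therefore forces
$$W(x) = \lambda_{N,s}^{-1}(1-|x|^2)^s \qquad \text{for } x \in \B.$$
Combined with the trivial bound $1-|x|^2 \le 2(1-|x|)$, this gives the first case of \eqref{eq:7}.

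For the remaining case $N = 1 < 2s$, no appeal to the Getoor identity is needed. The first branch of the minimum in the third case of \eqref{eq:G-est} yields the uniform pointwise bound
$$G_1(x,y) \le C\bigl((1-|x|)(1-|y|)\bigr)^{(2s-1)/2} \qquad \text{for all } x,y \in (-1,1),$$
and since $s > 1/2$ makes the exponent $(2s-1)/2$ positive, $(1-|y|)^{(2s-1)/2}$ is integrable on $(-1,1)$; integrating in $y$ then gives $W(x) \le C(1-|x|)^{s-1/2}$, as required. The only nonroutine ingredient is the Riesz--Getoor identity, which is classical; an alternative self-contained route via splitting $\B$ into $\{|x-y|\le d(x)/2\}$ and its complement, and then using the two branches of the minimum in the first case of \eqref{eq:G-est} in the respective regions, succeeds cleanly when $2s < N$ but incurs a logarithmic loss at the borderline $N = 1 = 2s$, making Getoor's formula the cleanest unified approach.
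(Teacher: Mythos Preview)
Your proof is correct, but it takes a genuinely different route from the paper in the case $2s \le N$.

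The paper proceeds exactly by the splitting approach you mention at the end: it decomposes $\B$ into $B^x=\{|x-y|<d(x)/2\}$ and its complement and estimates each piece directly from \eqref{eq:G-est}. For $N>2s$ it uses the first case of \eqref{eq:G-est} as you anticipated; for the borderline $N=1=2s$ it switches to the \emph{second} case of \eqref{eq:G-est}, using $G_1(x,y)\le C\log(3/|x-y|)$ on $B^x$ and $G_1(x,y)\le C\,d^{1/2}(x)d^{1/2}(y)/|x-y|$ on $\B\setminus B^x$. The near part then gives $\int_0^{d(x)/2}\log(3/t)\,dt=O\bigl(d(x)\log(1/d(x))\bigr)=O(d(x)^{1/2})$, so there is in fact no logarithmic loss---your concern applies only if one insists on the $N>2s$ branch of \eqref{eq:G-est} at the borderline. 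Your argument via the Getoor identity $W(x)=\lambda_{N,s}^{-1}(1-|x|^2)^s$ is cleaner and unified (and, incidentally, would even yield the sharper exponent $s$ rather than $s-\tfrac12$ in the case $N=1<2s$), at the price of importing an external classical formula; the paper's approach is entirely self-contained from \eqref{eq:G-est}. For $N=1<2s$ the two arguments coincide.
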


\begin{proof}
The second inequality in (\ref{eq:7}) is an immediate consequence of
the third inequality in \eqref{eq:G-est}. To prove the first inequality in (\ref{eq:7}), we let $d(x)=1-|x|$ and $B^x:= \{y \in
\R^N\::\: |y-x|<\frac{d(x)}{2}\} \subset \B$ for $x \in \B$. We then have
\begin{equation}
  \label{eq:8}
v(x) =\int_{B^x}G_1(x,y)f(y)dy+\int_{\B \setminus
  B^x}G(x,y)f(y)dy \qquad \text{for $x \in \B$}
\end{equation}
and
\begin{equation}
  \label{eq:15}
d(y)\leq |x-y|+d(x)\leq 3 |x-y| \qquad \text{for $y\in \R^N \setminus B^x.$}
\end{equation}
In the following, the letter $C$ stands for positive constants
depending only on $N$ and $s$. We first consider the
case $N > 2s$. Then the first inequality in \eqref{eq:G-est} implies
that
$$
\Bigl|\int_{B^x}G_1(x,y)f(y)dy\Bigr| \leq C \| f\|_{
  L^\infty}\int_{B^x}|x-y|^{2s-N} dy\leq  C d^{2s}(x) \| f\|_{
  L^\infty},
$$
and together with (\ref{eq:15}) it also yields
\begin{align*}
\Bigl|\int_{\B \setminus B^x}G(x,y)f(y)dy \Bigr|\leq  C d^s(x) \|f\|_{L^\infty}
\int_{\B \setminus B^x}&\frac{d^s(y)}{|x-y|^{N}} dy \le C
  d^s(x)\|f\|_{L^\infty} \int_{\B}|x-y|^{s-N} dy\\
&\le C d^s(x) \|f\|_{L^\infty}
 \int_{\B}|y|^{s-N} dy = C d^s(x)\|f\|_{L^\infty}.
\end{align*}
Combining these two inequalities, we obtain the assertion in the case $N>2s$.\\
Next we consider the case $N=1=2s$. Applying the second estimate in
\eqref{eq:G-est} yields
$$
\Bigl|\int_{B^x}G_1(x,y)f(y)dy\Bigr| \leq  C \|f\|_{L^\infty}
\int_{B^x}
\log \frac{3}{|x-y|}\,dy \le C \|f\|_{L^\infty} \int_0^{\frac{d(x)}{2}}\log
\frac{3}{t}\,dt \le C d^{1/2}(x)\|f\|_{L^\infty}
$$
and, as before, using (\ref{eq:15})
\begin{align*}
\Bigl|\int_{\B \setminus B^x}G_1(x,y)f(y)dy \Bigr|\le
C d^{1/2}(x)\|f\|_{L^\infty} &\int_{\B \setminus
  B^x}\frac{d^{1/2}(y)}{|x-y|}\,dy \le C d^{1/2}(x) \|f\|_{L^\infty} \int_{\B \setminus
  B^x}|x-y|^{-\frac{1}{2}}\,dy\\
&\le C
d^{1/2}(x)\|f\|_{L^\infty} \int_{0}^1 t^{-\frac{1}{2}}\,dt= C
d^{1/2}(x)\|f\|_{L^\infty}.
\end{align*}
Combining these two inequalities, we obtain the assertion in the case $N=1=2s$.
\end{proof}

\section{Green representation on the half-space}
\label{sec:representation}

The purpose of this section is to state
conditions on a function $u$ on the half-space $\R^N_+$ under which
the Green representation formula
$$
u(x)= \int_{\R^N_+} G_\infty^+(x,y) \Ds u(y)\,dy,\qquad  x \in \R^N_+
$$
holds, where $G_\infty^+$ is the half space Green function given by
\begin{equation}
  \label{eq:half-space-green-function}
G_\infty^+(x,y) = \frac{k_N^s}{2}|x-y|^{2s-N}\int_0^{\psi_\infty(x,y)}
\frac{z^{s-1}}{(z+1)^{N/2}}\,dz \quad \mbox{ with } \quad \psi_\infty(x,y)
=\frac{4x_1y_1}{|x-y|^2}
\end{equation}
for $x,y\in \R^N_+$. More precisely, we have the following

\begin{Theorem}
\label{green-poiss-form_plus}
Let  $f\in  L^\infty(\R^N_+)$ be nonnegative, and suppose that $u \in L^\infty(\R^N)$ satisfies
$$
\Ds u= f \quad\textrm{ in }\R^N_+,\qquad \, u=0\quad\textrm{ in }\R^N \setminus\R^N_+.
$$
Then $u$ is continuous, and
\begin{equation}
u(x)= \int_{\R^N_+}G^+_\infty(x,y)f(y)\:dy \qquad \text{for every $x
  \in \R^N_+$.}
\label{rep_plus}
\end{equation}
Moreover, there exist constants $C>0$, $\alpha \in (0,1)$ depending
only on $N$, $s$, $\|f\|_{L^\infty}$, $\|u\|_{L^\infty}$
such that
\begin{equation}
  \label{eq:5}
0 \le u(x) \le C x_1^\alpha \qquad \text{for every $x \in \R^N_+$.}
\end{equation}
\end{Theorem}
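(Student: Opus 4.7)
The plan is to invoke Corollary~\ref{cor:represeBR+} on a family of balls exhausting $\R^N_+$ from inside and to pass to the limit. The convenient choice is $B_R^{*} := B_R(Re_1) \subset \R^N_+$, internally tangent to $\partial \R^N_+$ at the origin. Since $u\equiv 0$ on $\R^N\setminus\R^N_+$, Corollary~\ref{cor:represeBR+} gives, for a.e.\ $x\in B_R^{*}$, the decomposition $u(x) = P_R(x) + V_R(x)$ with
$$
P_R(x) := \int_{\R^N_+\setminus B_R^{*}}\G_{B_R^{*}}(x,y)\,u(y)\,dy, \qquad V_R(x) := \int_{B_R^{*}} G_{B_R^{*}}(x,y)\,f(y)\,dy.
$$
Translating \eqref{eq:14} by $Re_1$ yields
$$
\psi_{B_R^{*}}(x,y) = \frac{(2Rx_1-|x|^2)(2Ry_1-|y|^2)}{R^2|x-y|^2} = \psi_\infty(x,y)\Bigl(1-\tfrac{|x|^2}{2Rx_1}\Bigr)\Bigl(1-\tfrac{|y|^2}{2Ry_1}\Bigr).
$$
For $x,y\in\R^N_+$ fixed and $R$ large enough that $x,y\in B_R^{*}$, both bracketed factors are positive and monotonically increasing to $1$; hence $\psi_{B_R^{*}}(x,y)\nearrow\psi_\infty(x,y)$ and, by \eqref{eq:14}, $G_{B_R^{*}}(x,y)\nearrow G_\infty^+(x,y)$. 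Since $f\ge 0$, monotone convergence gives $V_R(x)\nearrow \int_{\R^N_+}G_\infty^+(x,y)f(y)\,dy\in[0,+\infty]$ for every $x\in\R^N_+$.

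The main step is to prove $P_R(x)\to 0$ for each fixed $x\in\R^N_+$. The substitution $\tilde{x} := (x-Re_1)/R$, $\tilde{y} := (y-Re_1)/R$ rescales $\G_{B_R^{*}}$ to the Poisson kernel $\G_1$ of the unit ball and gives
$$
|P_R(x)|\;\le\; \|u\|_{L^\infty}\int_{\{|\tilde{y}|>1,\,\tilde{y}_1 > -1\}}\G_1(\tilde{x},\tilde{y})\,d\tilde{y},
$$
where $\tilde{x} = -e_1 + x/R \to -e_1$ as $R\to\infty$. Since $\int_{|\tilde{y}|>1}\G_1(\tilde{x},\tilde{y})\,d\tilde{y}=1$, it suffices to show that the Poisson mass of $\G_1(\tilde{x},\cdot)$ concentrates in $\{\tilde{y}_1\le -1\}$ as $\tilde{x}\to -e_1$. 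From the explicit formula $\G_1(\tilde{x},\tilde{y}) = C_{N,s}(1-|\tilde{x}|^2)^s(|\tilde{y}|^2-1)^{-s}|\tilde{x}-\tilde{y}|^{-N}$ and $1-|\tilde{x}|^2 \sim 2x_1/R$, the mass outside any fixed neighborhood $U$ of $-e_1$ decays like $(1-|\tilde{x}|^2)^s = O(R^{-s})$, since the factor $(|\tilde{y}|^2-1)^{-s}|\tilde{x}-\tilde{y}|^{-N}$ is uniformly integrable on $\{|\tilde{y}|>1\}\setminus U$. The remaining mass in the thin paraboloidal sliver $U\cap\{\tilde{y}_1>-1,\,|\tilde{y}|>1\}$ near $-e_1$ is the most delicate piece; I would handle it by an additional local rescaling $v=(x_1/R)\xi$ that captures the boundary asymptotics of $\G_1$ and produces a further $R^{-s}$ decay factor.

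Combining $P_R\to 0$ with the monotone convergence of $V_R$ shows $u(x) = \int_{\R^N_+}G_\infty^+(x,y)f(y)\,dy$ for a.e.\ $x\in\R^N_+$ and that this integral is finite. The right-hand side depends continuously on $x\in\R^N_+$ by dominated convergence and the explicit form of $G_\infty^+$, so \eqref{rep_plus} holds pointwise for every $x\in\R^N_+$, $u$ is continuous on $\R^N_+$, and $u\ge 0$ is automatic from $G_\infty^+\ge 0$ and $f\ge 0$. For the boundary H\"older bound \eqref{eq:5}, I would not integrate $G_\infty^+$ directly (it need not be in $L^1_y$), but apply the appendix's boundary regularity result at any $x_0\in\partial\R^N_+$ on a fixed-size ball: using the translation invariance of the problem in the directions parallel to $\partial\R^N_+$, this yields a uniform H\"older estimate $|u(x)-u(x_0)|\le C|x-x_0|^\alpha$ with $C$ and $\alpha\in(0,1)$ depending only on $N,s,\|u\|_{L^\infty},\|f\|_{L^\infty}$; since $u\equiv 0$ on $\partial\R^N_+$, this gives $u(x)\le C\,x_1^\alpha$ and also continuity of $u$ up to $\partial\R^N_+$.
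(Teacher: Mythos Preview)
Your overall architecture matches the paper's: exhaust $\R^N_+$ by the balls $B_R(Re_1)$, apply Corollary~\ref{cor:represeBR+}, use monotone convergence on the Green part (exactly the paper's Lemma~\ref{lem:GincmontoG+}), and show the Poisson part $P_R\to 0$. Two points, however, are not established.

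First, the ``sliver'' estimate is the heart of the proof and you only gesture at it. Your claim that a local rescaling yields ``a further $R^{-s}$ decay factor'' is not what actually happens: the paper's explicit computation (after the same rescaling you perform, written there as \eqref{eq:17}) shows that the integral over the paraboloidal region is $O(1)$ only when $s<\tfrac12$, while for $s>\tfrac12$ it blows up like $R^{2s-1}$ (and like $\log R$ when $s=\tfrac12$). The net decay of $P_R$ is therefore $O(R^{s-1})$ or $O(R^{-1/2}\log R)$ in those cases, not $O(R^{-s})$. The heuristic ``additional rescaling $v=(x_1/R)\xi$'' does not deliver this without the careful case-splitting Fubini computation that the paper carries out; as stated, your argument is incomplete and the quantitative claim is wrong for $s\ge \tfrac12$.

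Second, for the boundary estimate \eqref{eq:5} you invoke ``the appendix's boundary regularity result'', but the appendix contains only the \emph{interior} H\"older estimate Lemma~\ref{lem:intest}; there is no boundary regularity there. The paper obtains \eqref{eq:5} differently: it applies Corollary~\ref{cor:represeBR+} once more at fixed scale $R=1$, uses Lemma~\ref{sec:preliminaries-1} to bound the Green part by $C\,x_1^{\min(s,\,s-1/2)}$, and then estimates the Poisson part by exactly the same computation as in the $P_R\to0$ step (now with $x_1\to 0$ replacing $1/R\to 0$). In other words, the boundary H\"older bound is a by-product of the Poisson-kernel analysis you left unfinished, not an independent black-box input.
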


The remainder of this section will be devoted to the proof of this
Theorem. We first show how $G_\infty^+$, as defined in
(\ref{eq:half-space-green-function}), arises via an approximation with balls. For this we let $P_R:=(R,0,\dots,0) \in \R^N_+$, and we consider the
translated ball $B_R^+:= \{x \in \R^N\::\: |x-P_R|<R\} \subset \R^N_+$ for $R>0$. By
(\ref{eq:14}), its Green function $G_R^+$ is given by
$$
G_R^+(x,y) = \frac{k_N^s}{2}|x-y|^{2s-N}\!\!\int_0^{\psi^+_R(x,y)}
\!\frac{z^{s-1}}{(z+1)^{N/2}}\,dz \qquad \text{for $x,y\in B_R^+$}
$$
with
\begin{equation}
  \label{eq:6}
\psi_R^+(x,y) =\frac{(R^2-|x-P_R|^2)(R^2-|y-P_R|^2)}{R^2|x-y|^2}=
\frac{(2x_1-\frac{|x|^2}{R})(2y_1-\frac{|y|^2}{R})}{|x-y|^2},
\end{equation}
and its Poisson kernel is given by
\begin{equation}
  \label{eq:4}
\G^+_R(x,y)=C_{N,s}
\left(\frac{R^2-|x-P_R|^2}{|y-P_R|^2-R^2} \right)^s|x-y|^{-N}
\qquad \text{for $|x|<R<|y|$.}
\end{equation}
Note that
\begin{equation}
  \label{eq:1}
B_R^+ \subset B_{R'}^+ \quad \text{if $R'>R>0$} \qquad
\text{and}\qquad
\bigcup_{R>0} B_R^+ = \R^N_+.
\end{equation}

\begin{Lemma}\label{lem:GincmontoG+}
Let $x,y \in \R^N_+$ and $R_0>0$ with $x,y \in B_{R_0}^+$. Then
$G_{R'}^+(x,y) \ge G_{R}^+(x,y)$ for $R' \ge R \ge R_0$ and
$G_R^+(x,y) \to G_\infty^+(x,y)$ as $R \to \infty$.
\end{Lemma}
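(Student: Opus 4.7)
The plan is to reduce everything to the explicit formula for $G_R^+$ given just above the statement. Since
$$
G_R^+(x,y) = \frac{k_N^s}{2}|x-y|^{2s-N}\int_0^{\psi_R^+(x,y)} \frac{z^{s-1}}{(z+1)^{N/2}}\,dz
$$
and the integrand $z \mapsto \frac{z^{s-1}}{(z+1)^{N/2}}$ is nonnegative on $(0,\infty)$, the map $R \mapsto G_R^+(x,y)$ is nondecreasing once I show that $R \mapsto \psi_R^+(x,y)$ is nondecreasing, and its pointwise limit equals $G_\infty^+(x,y)$ once I show that $\psi_R^+(x,y) \to \psi_\infty(x,y)$ as $R \to \infty$. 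So the whole statement becomes a calculus exercise on the function $\psi_R^+$.

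First I would use the second form of $\psi_R^+$ from (\ref{eq:6}), namely
$$
\psi_R^+(x,y) = \frac{\bigl(2x_1-\tfrac{|x|^2}{R}\bigr)\bigl(2y_1-\tfrac{|y|^2}{R}\bigr)}{|x-y|^2}.
$$
The key observation is that $x \in B_{R_0}^+$ means exactly $|x-P_{R_0}|^2 < R_0^2$, which upon expanding gives $|x|^2 < 2R_0 x_1$, i.e. $2x_1 - \tfrac{|x|^2}{R_0}>0$. Since the function $R \mapsto 2x_1 - \tfrac{|x|^2}{R}$ is increasing in $R$, this factor stays positive and nondecreasing for all $R \ge R_0$, and likewise for the factor depending on $y$. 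Consequently $\psi_R^+(x,y)$ is the product (divided by a positive constant) of two positive, nondecreasing functions of $R$ on $[R_0,\infty)$, hence is itself nondecreasing. Monotonicity of $G_R^+(x,y)$ in $R$ then follows immediately from the nonnegativity of the integrand.

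For the limit, letting $R \to \infty$ in the expression for $\psi_R^+$ gives
$$
\psi_R^+(x,y) \;\longrightarrow\; \frac{4 x_1 y_1}{|x-y|^2} \;=\; \psi_\infty(x,y),
$$
and since the integrand $z \mapsto z^{s-1}(z+1)^{-N/2}$ is locally integrable on $[0,\infty)$, the function $t \mapsto \int_0^{t}\frac{z^{s-1}}{(z+1)^{N/2}}\,dz$ is continuous. Plugging $t = \psi_R^+(x,y)$ and passing to the limit yields $G_R^+(x,y) \to G_\infty^+(x,y)$.

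There is no serious obstacle: the only substantive point is verifying positivity of $2x_1 - \tfrac{|x|^2}{R}$ for $x \in B_R^+$, which is just the definition of $B_R^+$ rewritten. Everything else is bookkeeping with the explicit kernel.
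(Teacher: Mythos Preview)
Your proof is correct and follows exactly the same approach as the paper's own proof, which simply observes that from the formula $\psi_R^+(x,y)=\bigl(2x_1-\tfrac{|x|^2}{R}\bigr)\bigl(2y_1-\tfrac{|y|^2}{R}\bigr)/|x-y|^2$ one immediately gets monotonicity in $R$ and the correct limit. You have merely spelled out the details (positivity of the factors via $x\in B_R^+$, continuity of the upper-limit integral) that the paper leaves implicit.
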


\begin{proof}
From (\ref{eq:6}) we immediately deduce that $\psi_{R'}^+(x,y) \ge
\psi_{R}^+(x,y)$ for $R' \ge R \ge R_0$, and that
$\psi_R^+(x,y) \to \psi_\infty(x,y)$ as $R \to \infty$. This shows the
claim.
\end{proof}

We may now complete the\\

\begin{altproof}{Theorem~\ref{green-poiss-form_plus}}
It follows from Lemma~\ref{lem:intest} below that $u$ is continuous in
$\R^N_+$. Since $G^+_\infty$ is invariant under translations of the form $(x,y)
\mapsto (x+z,y+z)$ with $z \in \{0\} \times \R^{N-1}$, it suffices to
show (\ref{rep_plus}) for $x=(x_1,0,\dots,0)$ with $x_1>0$. We will
fix such a point $x$ from now on. By Corollary  \ref{cor:represeBR+}
and the fact that $u \equiv 0$ in $\R^N \setminus \R^N_+$, we have
\begin{align}
u(x)=&  \int_{\R^N_+\setminus B_R^+}\G_R^+(x,y)u(y)dy +\int_{B_R^+} G_R^+(x,y) f(y)\,dy \quad \textrm{ for $x \in B_R^+$}.\nonumber
\end{align}
Thanks to Lemma  \ref{lem:GincmontoG+}, the nonnegativity of $f$ and
monotone convergence, (\ref{rep_plus}) follows once
we have shown that
\be
\label{sec:green-repr-half-2}
\int_{\R^N_+\setminus B_R^+}\G^+_R(x,y)u(y)dy \to 0\quad \textrm{  as } R\to\infty.
\ee
In the following, we assume that $R>2x_1$, and we let $C$ denote
(possibly different) constants which may depend on $N,s,u$ and $x_1$ but not on
$R$. Using the fact that $u$ is bounded, we have
\begin{align}
\Bigl|\int_{\R^N_+\setminus B_R^+}&\G^+_R(x,y)u(y)dy\Bigr|\leq C\int_{\R^N_+\setminus B_R^+} \G^+_R(x,y)dy=C
\int_{\R^N_+\setminus B_R^+}\left(\frac{R^2-|x-P_R|^2}{|y-P_R|^2-R^2} \right)^s|x-y|^{-N}dy \nonumber\\
&=  C (2x_1 R-x_1^2)^s \int_{\R^N_+\setminus
  B_R^+}|x-y|^{-N}(|y|^2-2y_1R)^{-s}dy\nonumber\\
&\leq C R^s\int_{\R^N_+\setminus
  B_R^+}|x-y|^{-N}(|y|^2-2y_1R)^{-s}dy=C R^{-s}\int_{\R^N_+\setminus B_1^+}
|y-\frac{x}{R}|^{-N}(|y|^2-2y_1)^{-s}dy. \label{sec:green-repr-half-1}
\end{align}
We will now show that, as $R \to \infty$,
\begin{equation}
\label{eq:17}
\int_{\R^N_+\setminus B_1^+}
|y-\frac{x}{R}|^{-N}(|y|^2-2y_1)^{-s}dy = \left \{
  \begin{aligned}
  &O(1) &&\qquad \text{if $0<s < \frac{1}{2}$;}\\
  &O(R^{2s-1}) &&\qquad \text{if $\frac{1}{2}<s<1$;}\\
  &O(\log R^2) && \qquad \text{if $s=\frac{1}{2}$.}
  \end{aligned}
\right.
\end{equation}
Together with (\ref{sec:green-repr-half-1}) this implies
\eqref{sec:green-repr-half-2}, since $s<1$. To show (\ref{eq:17}),
we decompose the domain of integration as
$$
R^\N_+ \setminus B_1^+ = A_1 \cup A_2\qquad \quad \text{with $\quad A_1:=
  ((0,1) \times \R^{N-1}) \setminus B_1^+,$ $\quad A_2:=
([1,\infty) \times \R^{N-1}) \setminus B_1^+$.}
$$
We then have $|y-\frac{x}{R}|\ge |y-e_1|$ for every $y \in A_2$, where
$e_1=(1,0,\dots,0)$, and therefore
\begin{align}
\int_{A_2}|y-\frac{x}{R}|^{-N}&(|y|^2-2y_1)^{-s}dy  \le
\int_{A_2}|y-e_1|^{-N}(|y-e_1|^2-1)^{-s}dy \nonumber \\
& \le \int_{\R^N \setminus B_1(0)}|y|^{-N}(|y|^2-1)^{-s}dy
 = \omega_{N-1} \int_1^\infty \tau^{-1}(\tau^2-1)^{-s}\,d\tau <
 \infty.\label{sec:green-repr-half}
\end{align}
In case $N=1$, $A_1$ is empty, and thus (\ref{eq:17}) follows. We now
assume that $N \ge 2$, and we put $B_t:=  \{\tilde y \in \R^{N-1} \::\: |\tilde y|^2 \ge 2t -t^2\}$
for $t \in (0,1)$. By Fubini's theorem, we have
\begin{align}
\int_{A_1}
&|y-\frac{x}{R}|^{-N}(|y|^2-2y_1)^{-s}dy=\int_{0}^1 \int_{B_t}(|\tilde
y|^2+(t-\frac{x_1}{R})^2)^{-\frac{N}{2}}(|\tilde y|^2+
t^2-2t)^{-s}d \tilde y d t\nonumber\\
&=\omega_{N-2} \int_{0}^1 \int_{\sqrt{2t -t^2}}^\infty \tau^{N-2}
 (\tau^2 +(t-\frac{x_1}{R})^2)^{-\frac{N}{2}}(\tau^2-[2t-
t^2])^{-s}d\tau d t\nonumber\\
&=\omega_{N-2} \int_{0}^1(2t-t^2)^{-\frac{1}{2}-s}
 \int_{1}^\infty \sigma^{N-2}
 \Bigl(\sigma^2 +\frac{(t-\frac{x_1}{R})^2}{2t-t^2}\Bigr)^{-\frac{N}{2}}(\sigma^2-1)^{-s}d\sigma d t\nonumber\\
&\le \omega_{N-2} \int_{0}^1 t^{-\frac{1}{2}-s}  \int_0^\infty (\rho+1)^{\frac{N-3}{2}}
\Bigl(\rho+1 +\frac{(t-\frac{x_1}{R})^2}{2t}\Bigr)^{-\frac{N}{2}}
\rho^{-s}d\rho
 d t \nonumber\\
&\le \omega_{N-2} \int_{0}^1 t^{-\frac{1}{2}-s}\Bigl(1
+\frac{(t-\frac{x_1}{R})^2}{2t}\Bigr)^{-\frac{1}{2}}  \int_0^\infty
(\rho+1)^{-1}\rho^{-s}d\rho
 d t \nonumber\\
&\le C \int_{0}^1 t^{-\frac{1}{2}-s}\Bigl(1
+\frac{(t-\frac{x_1}{R})^2}{2t}\Bigr)^{-\frac{1}{2}} d t, \label{sec:green-repr-half-4}
\end{align}
whereas
$$
\Bigl(1
+\frac{(t-\frac{x_1}{R})^2}{2t}\Bigr)^{-\frac{1}{2}} \le \sqrt{2t}\Bigl(2t+(t-\frac{x_1}{R})^2
\Bigr)^{-\frac{1}{2}}
\le  \min \Bigl \{1,
\frac{\sqrt{2t}R}{x_1}\Bigr\}
\qquad \text{for $t \in (0,1)$,}
$$
since $\frac{x_1}{R} \le 1$ by
assumption. Inserting this in (\ref{sec:green-repr-half-4}) yields
$$
\int_{A_1}
|y-\frac{x}{R}|^{-N}(|y|^2-2y_1)^{-s}dy  \le
C\Bigl( \frac{\sqrt{2} R}{x_1} \int_0^{\frac{x_1^2}{2 R^2}} t^{-s}\,ds +
 \int_{\frac{x_1^2}{2 R^2}}^1 t^{-\frac{1}{2}-s}\,dt \Bigr)\le
C  \left \{
   \begin{aligned}
&1+R^{2s-1}&&\quad \text{if $s\not=\frac{1}{2}$;}\\
   &1+\log R^2 &&\quad \text{if $s=\frac{1}{2}$.}
   \end{aligned}
\right.
$$
Combining the last inequality with
(\ref{sec:green-repr-half}), we deduce (\ref{eq:17}), as
required. Thus the proof of (\ref{rep_plus}) is finished.\\
To show (\ref{eq:5}), we may assume without loss that
$x=(x_1,0,\dots,0)$ with $0 \le x_1 \le 1$. Moreover, we let $C>0$
denote constants depending on $N,s$, $\|f\|_{L^\infty(\R^N_+)}$ and
$\|u\|_{L^\infty(\R^N_+)}$ but not on $x$. By Corollary
\ref{cor:represeBR+}, we have
$$
u(x) = \int_{\R^N \setminus B_1^+}\Gamma_1^+(x,y)u(y)\,dy
+v(x) \qquad \text{with}\quad  v(x)=
\int_{B_1^+}G_1^+(x,y)f(y)\,dy.
$$
By Lemma~\ref{sec:preliminaries-1}, it suffices to show
(\ref{eq:5}) for $v$ in place of $u$. For this we estimate, similarly
as in (\ref{sec:green-repr-half-1}),
\begin{align}
0 \le v(x) \leq C\int_{\R^N_+\setminus B_1^+} \G^+_1(x,y)dy &\le  C (2x_1 -x_1^2)^s \int_{\R^N_+\setminus
  B_1^+}|x-y|^{-N}(|y|^2-2y_1)^{-s}dy \nonumber\\
&\leq C x_1^s\int_{\R^N_+\setminus
  B_1^+}|y-x|^{-N}(|y|^2-2y_1)^{-s}dy.
\label{eq:21}
\end{align}
Replacing $\frac{x}{R}$ with $x=(x_1,0,\dots,0)$ in (\ref{eq:17}), we
also have, as $x_1 \to 0^+$,
\begin{equation}
\label{eq:22}
\int_{\R^N_+\setminus B_1^+}
|y-x|^{-N}(|y|^2-2y_1)^{-s}dy = \left \{
  \begin{aligned}
  &O(1) &&\qquad \text{if $0<s < \frac{1}{2}$;}\\
  &O(x_1^{1-2s}) &&\qquad \text{if $\frac{1}{2}<s<1$;}\\
  &O(\log \frac{1}{x_1^{2}}) && \qquad \text{if $s=\frac{1}{2}$.}
  \end{aligned}
\right.
\end{equation}
Combining (\ref{eq:21}) and (\ref{eq:22}) yields constants $C>0$,
$\alpha \in (0,1)$ such that (\ref{eq:5}) holds for $v$ in place of
$u$, and this finishes the proof.
\end{altproof}

\section{Proof of the monotonicity result}
\label{sec:proof-monot-result}
In this section we complete the proof of Theorem~\ref{main-result-1}. We know from Theorem \ref{green-poiss-form_plus} that every bounded   solution $u$ of
\eqref{eq:main-half-space} obeys the integral representation
$$
u(x)= \int_{\R^N_+} G^+_\infty(x,y)f(u(y))\,dy \qquad \text{for all $x \in \R^N_+$},
$$
where $ G^+_\infty$ is the half-space Green function given by
(\ref{eq:half-space-green-function}). We note the following simple estimate.

\begin{lemma}
\label{sec:introduction-1}
Let $L>0$. Then there exists a constant $C=C(N,s,L)>0$ such that for
every $x,y \in \R^N_+$ with $x_1,y_1 \le L$ we have
\begin{equation}
  \label{eq:9}
 G^+_\infty(x,y) \le C\left \{
   \begin{aligned}
&\min \Bigl\{|x-y|^{2s-N},|x-y|^{-N}\Bigr\} &&\qquad \text{if $N >
  2s$;}\\
&1 &&\qquad \text{if $N=1< 2s$;}\\
&1+\log \frac{L}{|x-y|} &&\qquad \text{if $N=1= 2s$.}
   \end{aligned}
\right.
\end{equation}
\end{lemma}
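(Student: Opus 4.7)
My plan is to bound the one-dimensional integral
\[
I(\psi) := \int_0^\psi \frac{z^{s-1}}{(z+1)^{N/2}}\,dz
\]
appearing in the definition of $G_\infty^+$ in each of the three regimes, then substitute $\psi_\infty(x,y)=4x_1y_1/|x-y|^2$ and absorb the factors $x_1,y_1$ into the constant via $x_1,y_1\le L$.

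The crude estimate $(z+1)^{-N/2}\le 1$ yields $I(\psi)\le \psi^s/s$ for every $\psi>0$, which combined with the prefactor $|x-y|^{2s-N}$ gives
\[
G_\infty^+(x,y)\le C(x_1y_1)^s|x-y|^{-N}\le CL^{2s}|x-y|^{-N},
\]
valid in every case. The complementary estimate comes from analyzing the integral as $\psi\to\infty$: when $N>2s$, the full improper integral $I(\infty)$ converges ($z^{s-1}$ is integrable at $0$, $z^{s-1-N/2}$ at infinity), so $I(\psi)\le I(\infty)$ and $G_\infty^+(x,y)\le C|x-y|^{2s-N}$; taking the minimum of the two bounds settles the case $N>2s$.

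For $N=1<2s$ I split on $\psi\le 1$ versus $\psi>1$. In the first subcase the crude bound combined with $\psi\le 1 \iff |x-y|^2\ge 4x_1y_1$ gives $G_\infty^+\le C(x_1y_1)^s|x-y|^{-1}\le C(x_1y_1)^{s-1/2}\le CL^{2s-1}$. In the second, splitting $I(\psi)=\int_0^1+\int_1^\psi$ and using $\int_1^\psi z^{s-3/2}\,dz\le C\psi^{s-1/2}$ (which requires $s>1/2$) gives $G_\infty^+\le C|x-y|^{2s-1}+C(x_1y_1)^{s-1/2}$; since $N=1$ forces $|x-y|\le x_1+y_1\le 2L$, both terms are controlled by a constant depending on $L$.

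For $N=1=2s$ the integrand $(z(z+1))^{-1/2}$ has explicit antiderivative $2\log(\sqrt{z}+\sqrt{z+1})$, giving $I(\psi)=2\log(\sqrt{\psi}+\sqrt{\psi+1})$. This is bounded by a constant for $\psi\le 1$, while for $\psi>1$ it is at most $C(1+\log\psi)\le C(1+\log(L^2/|x-y|^2))\le C(1+\log(L/|x-y|))$; the prefactor $|x-y|^{2s-N}=1$ finishes the case. All estimates are elementary, and I expect no single step to be a real obstacle beyond choosing the right bound on $I(\psi)$ in each regime.
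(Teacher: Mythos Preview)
Your proposal is correct and follows essentially the same strategy as the paper: bound the auxiliary integral $I(\psi)$ in each regime and use $\psi_\infty(x,y)\le 4L^2/|x-y|^2$. The only minor differences are cosmetic --- for $N=1<2s$ the paper uses the single global bound $I(t)\le \frac{t^{s-1/2}}{s-1/2}$ (via $(z+1)^{-1/2}\le z^{-1/2}$) rather than splitting on $\psi\lessgtr 1$, and for $N=1=2s$ it writes $I(t)\le \frac{1}{s}+\log\max\{1,t\}$ instead of the explicit antiderivative; both lead to the same estimates.
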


\begin{proof}
In the case $N > 2s$ we have $\int_0^{\infty}
\frac{z^{s-1}}{(z+1)^{N/2}}\,dz<\infty$, which immediately implies
that $G^+_\infty(x,y) \le C |x-y|^{2s-N}$ for all $x,y \in \R^N_+$
with a constant $C>0$ depending only on $N$ and $s$.
Moreover, for $t>0$ we have
$$
\int_0^t \frac{z^{s-1}}{(z+1)^{N/2}}\,dz <\frac{t^s}{s}
$$
and
$$
\int_0^t \frac{z^{s-1}}{(z+1)^{N/2}}\,dz \le
\left \{
\begin{aligned}
&\frac{t^{s-N/2}}{s-N/2} &&\qquad \text{if $N<2s$;}\\
&\frac{1}{s} + \log \max \{1,t\} &&\qquad \text{if $N=2s$.}
\end{aligned}
\right.
$$
For $x,y \in
\R^N_+$ with $x_1,y_1 \le L$ we have also $\psi_\infty(x,y) \le
\frac{L^2}{|x-y|^2}$ and therefore
$$
\int_0^{\psi_\infty(x,y)} \frac{z^{s-1}}{(z+1)^{N/2}}\,dz \le
C |x-y|^{-2s}
$$
as well as
$$
\int_0^{\psi_\infty(x,y)} \frac{z^{s-1}}{(z+1)^{N/2}}\,dz \le
C \left \{
  \begin{aligned}
&|x-y|^{N-2s} &&\qquad \text{if $N<2s$,}\\
&1 +\ln \frac{L}{|x-y|}&&\qquad \text{if $N<2s$}
  \end{aligned}
\right.
$$
with a constant $C>0$ depending only on $N,s,L$. This readily implies
the assertion.
\end{proof}

We need some notation. For $\lambda \ge 0$, we consider the set
$$
\Sigma_\lambda= \{x \in \R^N\::\: 0<x_1 <\lambda\} \subset \R^N_+.
$$
From Lemma~\ref{sec:introduction-1} we easily deduce the following.

\begin{corollary}
\label{sec:introduction-2}
As $\lambda \to 0$, $\sup \limits_{x \in \Sigma_\lambda}\int_{\Sigma_\lambda} G^+_\infty(x,y)\,dy \to 0$.
\end{corollary}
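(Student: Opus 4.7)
The plan is to exploit the translation invariance of $G^+_\infty$ in the directions $x_2,\dots,x_N$ to reduce to a one-parameter family of integrals, then apply the pointwise bound of Lemma~\ref{sec:introduction-1} and integrate by Fubini on the slab. From the explicit formula \eqref{eq:half-space-green-function}, the kernel $G^+_\infty(x,y)$ depends on $x':=(x_2,\dots,x_N)$ and $y':=(y_2,\dots,y_N)$ only through the difference $x'-y'$, so the change of variable $y\mapsto y+(0,x')$ leaves the integrand and the domain $\Sigma_\lambda$ invariant and reduces the problem to bounding $\int_{\Sigma_\lambda} G^+_\infty((x_1,0),y)\,dy$ uniformly in $x_1\in(0,\lambda)$.

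Restricting to $\lambda\le 1$ and invoking Lemma~\ref{sec:introduction-1} with $L=1$, the principal case $N>2s$ yields $G^+_\infty(x,y)\le C\min\{|x-y|^{2s-N},|x-y|^{-N}\}$ for $x,y\in\Sigma_\lambda$. Writing $y=(y_1,y')$ with $y'\in\R^{N-1}$ and setting $t=x_1-y_1\in(-\lambda,\lambda)$, Fubini's theorem gives
\[
\int_{\Sigma_\lambda} G^+_\infty((x_1,0),y)\,dy \;\le\; C\int_{-\lambda}^{\lambda}\Phi(t)\,dt,\qquad \Phi(t):=\int_{\R^{N-1}}\min\bigl\{(t^2+|y'|^2)^{(2s-N)/2},\,(t^2+|y'|^2)^{-N/2}\bigr\}\,dy'.
\]
Splitting the $y'$-integration at the sphere $|y'|^2+t^2=1$ and rescaling $y'=|t|w$ in the near region, one checks that $\Phi(t)\le C(1+|t|^{2s-1})$ if $s<\tfrac12$, $\Phi(t)\le C(1+\log\tfrac{1}{|t|})$ if $s=\tfrac12$, and $\Phi(t)\le C$ if $s>\tfrac12$. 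In each sub-case $\Phi\in L^1_{\mathrm{loc}}(\R)$, so absolute continuity of the Lebesgue integral yields $\int_{-\lambda}^{\lambda}\Phi(t)\,dt\to 0$ as $\lambda\to 0$, uniformly in $x_1$.

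The two remaining cases are simpler. When $N=1<2s$, Lemma~\ref{sec:introduction-1} gives $G^+_\infty(x,y)\le C$, so the integral is at most $C\lambda$; when $N=1=2s$, the logarithmic bound $G^+_\infty(x,y)\le C(1+\log\tfrac{1}{|x-y|})$ integrates on $(0,\lambda)$ to $O\bigl(\lambda(1+\log\tfrac{1}{\lambda})\bigr)$, which also vanishes as $\lambda\to 0$. The main technical point is that the purely local bound $|x-y|^{2s-N}$ is \emph{not} integrable at infinity in the tangential variable $y'$; retaining the improved decay $\min\{|x-y|^{2s-N},|x-y|^{-N}\}$ from Lemma~\ref{sec:introduction-1} is precisely what makes the inner $\R^{N-1}$-integral defining $\Phi(t)$ finite, which is the only nontrivial obstacle.
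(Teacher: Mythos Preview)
Your proof is correct and follows the same approach as the paper: the paper simply states that the corollary is ``easily deduce[d]'' from Lemma~\ref{sec:introduction-1} without further detail, and you have supplied exactly that deduction. Your observation that the improved tail $|x-y|^{-N}$ (rather than $|x-y|^{2s-N}$) is essential for the tangential integrability when $s\ge\tfrac12$ is the key point the reader would need to check.
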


We also consider the reflection
$x \mapsto x^\lambda:= (2\lambda-x_1,x_2,\dots,x_N)$ at
the hyperplane $\{x_1=\lambda\}$.

We need the following fact which also follows from Lemma~\ref{sec:introduction-1} in a straightforward way.
\begin{corollary}
\label{sec:introduction-2.1}
If $(\lambda_n)_n \subset (0,\infty)$ and $(z^n)_n \subset \R^N_+$ are
sequences with
$$
\lambda_n \to \lambda>0 \quad \text{and}\quad z^n \to z \in \R^N_+ \qquad
\text{as $n \to \infty$,}
$$
then
$$
\int_{\Sigma_{\lambda_n}} G^+_\infty(z^n,y^{\lambda_n})\:dy \to
\int_{\Sigma_{\lambda}} G^+_\infty(z,y^{\lambda})\:dy
\qquad \text{as $n \to \infty$.}
$$
\end{corollary}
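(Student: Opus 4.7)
The plan is to apply a dominated convergence argument after the measure-preserving change of variables $w = y^{\lambda_n}$, under which
\[
\int_{\Sigma_{\lambda_n}} G^+_\infty(z^n,y^{\lambda_n})\,dy = \int_{A_{\lambda_n}} G^+_\infty(z^n,w)\,dw, \qquad A_\mu:=(\mu,2\mu)\times\R^{N-1},
\]
and similarly for the limit integral with $A_\lambda$. It thus suffices to prove $\int_{A_{\lambda_n}}G^+_\infty(z^n,w)\,dw\to\int_{A_\lambda}G^+_\infty(z,w)\,dw$ as $n\to\infty$.

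The integrands converge pointwise a.e., because $\chi_{A_{\lambda_n}}(w)\to\chi_{A_\lambda}(w)$ off the null set $\{w_1\in\{\lambda,2\lambda\}\}$, and $G^+_\infty(z^n,w)\to G^+_\infty(z,w)$ off the diagonal, by continuity of $G^+_\infty$ (clear from its explicit form). For $n$ large, $\lambda_n<2\lambda$ and $|z^n-z|<1$, so $A_{\lambda_n}\subset A^*:=(0,4\lambda)\times\R^{N-1}$ and $z^n_1\le z_1+1$. Setting $L:=\max\{4\lambda,z_1+1\}$, Lemma~\ref{sec:introduction-1} then furnishes a case-dependent decreasing bound $G^+_\infty(z^n,w)\le C\,\Psi(|z^n-w|)$ valid uniformly in $n$.

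Because the singularity of this bound at $z^n$ moves with $n$, a single pointwise majorant need not exist near $z$, so I would use a splitting argument. Given $\epsilon>0$, I would choose $R>0$ small and split each integral into a near part over $B(z,R)$ and a far part over the complement. For the near part, the substitution $v=w-z^n$ combined with the local integrability $\int_{B(0,\rho)}|v|^{2s-N}\,dv=O(\rho^{2s})$ (when $N>2s$; analogous logarithmic or trivial estimates apply when $N\le 2s$) yields a bound of order $R^{2s}$ (respectively $R\log(1/R)$, $R$) uniformly in $n$, which can be made $<\epsilon$. For the far part, $|z^n-w|\ge|z-w|/2$ for $n$ large, so the $n$-independent majorant $C\,\Psi(|z-w|/2)\,\chi_{A^*\setminus B(z,R)}$ dominates the integrand; its integrability follows from Lemma~\ref{sec:introduction-1} together with polar coordinates in the transverse variable $\tilde w\in\R^{N-1}$ (or mere boundedness of the slab when $N=1$). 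Standard DCT then yields convergence of the far part for fixed $R$, and letting $n\to\infty$ first and then $R\to 0$ concludes the argument.

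The main technical obstacle is the possibility $z_1\in(\lambda,2\lambda)$, in which the singularity at $z$ genuinely lies inside $A_\lambda$; apart from the case-by-case verification in the three regimes of Lemma~\ref{sec:introduction-1}, the argument is routine.
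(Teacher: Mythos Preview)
Your proposal is correct and is exactly the kind of argument the paper has in mind: the paper does not give a proof of this corollary at all, stating only that it ``follows from Lemma~\ref{sec:introduction-1} in a straightforward way.'' Your dominated-convergence argument with the near/far splitting is a careful fleshing-out of precisely that route, and the case analysis according to the three regimes of Lemma~\ref{sec:introduction-1} is handled correctly.
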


In the sequel we also consider
$J_{\lambda}:= \{x \in \R^N_+ \::\: x_1 \ge 2 \lambda\}$.
We need the following ``reflection inequalities'' for the Green function.
\begin{lemma}
\label{green-monotonicity}
We have
 \begin{equation}
\label{eq:Glambda1} \left.
\begin{aligned}
& G^+_\infty(x^\lambda,y^\lambda)>  G^+_\infty(x,y^\lambda)\qquad
\text{and}\\
& G^+_\infty(x^\lambda,y^\lambda)- G^+_\infty(x,y)>  G^+_\infty(x,y^\lambda)- G^+_\infty(x^\lambda,y)
\end{aligned}
\quad \right \} \qquad \text{for all $x,y \in \Sigma_\lambda$}
\end{equation}
and
\begin{equation}
  \label{eq:extra-est1}
 G^+_\infty(x^{{\lambda}},y)- G^+_\infty(x,y)>0 \qquad \text{for $x \in \Sigma_{\lambda}$, $y \in J_\lambda$.}
\end{equation}
\end{lemma}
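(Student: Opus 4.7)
The key step is to rewrite $G_\infty^+$ in a cleaner integral form via the substitution $z = u/|x-y|^2$ in \eqref{eq:half-space-green-function}, namely
\begin{equation*}
G_\infty^+(x,y) \;=\; \frac{k_N^s}{2}\int_0^{4 x_1 y_1} \frac{u^{s-1}}{(|x-y|^2 + u)^{N/2}}\,du.
\end{equation*}
In this form $G_\infty^+$ is strictly increasing in the product $x_1 y_1$ (upper limit) and strictly decreasing in $|x-y|$ (integrand). With this representation the proof becomes book-keeping using two elementary observations: the reflection is an isometry, so $|x^\lambda - y^\lambda| = |x-y|$ and $|x^\lambda - y| = |x - y^\lambda|$; and a direct expansion gives $|x-y^\lambda|^2 - |x-y|^2 = 4(\lambda-x_1)(\lambda-y_1)$, which is strictly positive whenever $x,y \in \Sigma_\lambda$. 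Throughout the argument I set $r := |x-y|$ and $R := |x-y^\lambda| > r$, and note that $x_1^\lambda = 2\lambda - x_1 > x_1 > 0$ and $y_1^\lambda > y_1 > 0$.

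For the first inequality in \eqref{eq:Glambda1}, I write
\begin{equation*}
G_\infty^+(x^\lambda,y^\lambda) = \frac{k_N^s}{2}\int_0^{4 x_1^\lambda y_1^\lambda}\frac{u^{s-1}\,du}{(r^2+u)^{N/2}},\qquad G_\infty^+(x,y^\lambda) = \frac{k_N^s}{2}\int_0^{4 x_1 y_1^\lambda}\frac{u^{s-1}\,du}{(R^2+u)^{N/2}}.
\end{equation*}
The upper limit on the left is larger and the integrand on the left pointwise exceeds the one on the right (since $r<R$), which yields the strict inequality.

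For the second inequality, rearranging $G^+_\infty(x^\lambda,y^\lambda)-G^+_\infty(x,y) > G^+_\infty(x,y^\lambda)-G^+_\infty(x^\lambda,y)$ and using the isometry identities reduces the claim to
\begin{equation*}
\int_{4 x_1 y_1}^{4 x_1^\lambda y_1^\lambda}\frac{u^{s-1}\,du}{(r^2+u)^{N/2}} \;>\; \int_{4 x_1^\lambda y_1}^{4 x_1 y_1^\lambda}\frac{u^{s-1}\,du}{(R^2+u)^{N/2}},
\end{equation*}
where the integrals are interpreted as signed. Since $x_1^\lambda y_1 - x_1 y_1^\lambda = 2\lambda(y_1 - x_1)$, I split into two cases. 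If $y_1 > x_1$, the right-hand side is a signed integral over a reversed interval and is therefore nonpositive, whereas the left-hand side is strictly positive; done. If $y_1 \le x_1$, then $4 x_1 y_1 \le 4 x_1^\lambda y_1 \le 4 x_1 y_1^\lambda \le 4 x_1^\lambda y_1^\lambda$, so $[4 x_1^\lambda y_1, 4 x_1 y_1^\lambda] \subset [4 x_1 y_1, 4 x_1^\lambda y_1^\lambda]$; on the common interval the left integrand strictly exceeds the right integrand because $r<R$, and the excess length on the left contributes an additional strictly positive term.

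For \eqref{eq:extra-est1}, observe that $y_1 \ge 2\lambda > \lambda > x_1 > 0$ forces $x_1^\lambda \in (\lambda, 2\lambda] \subset (x_1, y_1]$, making $x_1^\lambda$ strictly closer to $y_1$ than $x_1$ is; hence $|x^\lambda - y| < |x-y|$, while simultaneously $x_1^\lambda y_1 > x_1 y_1$. Both effects strictly enlarge the integral form of $G_\infty^+$. The only nontrivial obstacle in the entire lemma is the case distinction for the second inequality, which is elementary once the clean integral form of $G_\infty^+$ is in place.
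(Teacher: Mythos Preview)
Your proof is correct and is essentially the paper's own argument in more explicit form. The paper writes $G_\infty^+(x,y)=H(r,t)$ with $r=|x-y|^2$, $t=4x_1y_1$ and $H(r,t)=r^{s-N/2}\int_0^{t/r}\frac{z^{s-1}}{(z+1)^{N/2}}\,dz$, then invokes \cite[Lemma~2]{BGW} for the sign of $\partial_r H$, $\partial_t H$ and $\partial_r\partial_t H$; your substitution $z=u/|x-y|^2$ simply rewrites $H(r,t)=\int_0^t\frac{u^{s-1}}{(r+u)^{N/2}}\,du$, which makes those monotonicity properties transparent and lets you carry out the comparison of integrals directly (your case split on $y_1\gtrless x_1$ is exactly the paper's handling of the absolute value in its inequality $H(r_1,t_4)-H(r_1,t_1)>|H(r_2,t_2)-H(r_2,t_3)|$).
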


\begin{proof}
We note that $G(x,y)= H(s(x,y),t(x,y))$, where
$$
H: (0,\infty) \times [0,\infty) \to \R,\qquad H(r,t)=
r^{s-\frac{N}{2}} \int_0^{\frac{t}{r}} \frac{z^{s-1}}{(z+1)^{N/2}}\,dz.
$$
and
$$
r(x,y)=|x-y|^2, \quad t(x,y)=4x_1 y_1 \qquad \text{for $x,y \in \R^N_+$.}
$$
By \cite[Lemma 2]{BGW} we have
\begin{equation}
\label{eq:monotonicity}
\partial_r H(r,t)<0,\quad \partial_t H(r,t)>0\quad \text{and}\quad \partial_r
\partial_t H(r,t)<0 \qquad \text{for $r,t>0$.}
\end{equation}
Hence
\begin{equation}\label{eq:19}
H(r_1,t_1) > H(r_2,t_2)\qquad\text{if $r_1 < r_2,\:t_1 > t_2$,}
\end{equation}
and
\begin{align}
H(r_1,t_4) -H(r_1,t_1)&= \int_{t_1}^{t_4} \partial_t H(r_1,t)\:dt
 > \int_{t_1}^{t_4} \partial_t H(r_2,t)\:dt \nonumber > \int_{\min\{t_2,t_3\}}^{\max\{t_2,t_3\}}
 \partial_t H(r_2,t)\:dt \nonumber\\
&= |H(r_2,t_2)-H(r_2,t_3)| \qquad \qquad \text{if $0<r_1<r_2,\; 0< t_1 < t_2,t_3 < t_4$.} \label{eq:20}
\end{align}
Now fix $\lambda >0$. Then for $x,y \in \Sigma_\lambda$ we have
\begin{equation}\label{eq:18}
r(x,y)= r(x^\lambda,y^\lambda)<r(x,y^\lambda)= r(x^\lambda,y)
\end{equation}
and
\begin{equation}\label{eq:16}
t(x^\lambda,y^\lambda) \,> \,\max\{t(x^\lambda,y),\,t(x,y^\lambda)\}
\,\ge\, \min\{t(x^\lambda,y),\,t(x,y^\lambda)\}\,>\,t(x,y).
\end{equation}
The inequalities given in Lemma~\ref{green-monotonicity} now follow from \eqref{eq:19},~\eqref{eq:20},~\eqref{eq:18} and~\eqref{eq:16}.
\end{proof}

We now fix a solution $u$ of \eqref{eq:main-half-space}, and we let
$C_u>0$ be a Lipschitz constant for $f$ on
$\bigl[0,\|u\|_{L^\infty(\R^N)} \bigr]$, so that
$$
|f(t)-f(r)| \le C_u |t-r| \qquad \text{for all $r,t \in
  \bigl[ 0,\|u\|_{L^\infty(\R^N)}\bigr]$.}
$$
Inequality \eqref{eq:extra-est1} and the nonnegativity of $f$ imply that
\begin{equation}
  \label{eq:new11}
 \int_{J_{\lambda}}[ G^+_\infty(x^{{\lambda}},y)- G^+_\infty(x,y)]\,
f(u(y))\, dy \ge 0  \qquad \text{for $\lambda>0$ and $x \in
\Sigma_{\lambda}$.}
\end{equation}
We claim that the following reflection inequality holds for every $\lambda>0$:
$$
\leqno{(\cC_\lambda)}\qquad \qquad \qquad
u(x)\le u(x^\lambda)\qquad \text{for all $x \in \Sigma_\lambda$.}
$$
As a first step, we prove

\begin{lemma}
\label{sec:introduction-3}
There exists $\lambda_0>0$ such that $(\cC_{\lambda})$ holds for
$\lambda \in [0,\lambda_0]$.
\end{lemma}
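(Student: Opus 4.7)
The plan is a moving-plane argument for the integral equation supplied by Theorem~\ref{green-poiss-form_plus}. For $x \in \Sigma_\lambda$ set $w_\lambda(x) := u(x^\lambda) - u(x)$. Subtracting the Green representations of $u$ at $x^\lambda$ and $x$, and splitting $\R^N_+$ into $\Sigma_\lambda$, $\Sigma_\lambda^\lambda := \{\lambda < y_1 < 2\lambda\}$ and $J_\lambda$, the substitution $y = z^\lambda$ folds the $\Sigma_\lambda^\lambda$-piece back into $\Sigma_\lambda$ and yields
\[
w_\lambda(x) = \int_{\Sigma_\lambda} \bigl[A_1 f(u(z)) + A_2 f(u(z^\lambda))\bigr]dz + \int_{J_\lambda} \bigl[G^+_\infty(x^\lambda,y) - G^+_\infty(x,y)\bigr] f(u(y))\,dy,
\]
where $A_1 := G^+_\infty(x^\lambda,z) - G^+_\infty(x,z)$ and $A_2 := G^+_\infty(x^\lambda,z^\lambda) - G^+_\infty(x,z^\lambda)$. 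By (\ref{eq:new11}) the $J_\lambda$-integral is nonnegative.

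From Lemma~\ref{green-monotonicity} I extract $A_2 > 0$ and $A_1 + A_2 > 0$. Writing $f(u(z^\lambda)) = f(u(z)) + [f(u(z^\lambda)) - f(u(z))]$ recasts the $\Sigma_\lambda$-integrand as $(A_1 + A_2)f(u(z)) + A_2[f(u(z^\lambda)) - f(u(z))]$, whose first summand is nonnegative (since $f \ge 0$) and can therefore be discarded. Setting $w_\lambda^- := \max(-w_\lambda,0)$ and $\Sigma_\lambda^- := \{z \in \Sigma_\lambda : w_\lambda(z) < 0\}$, the monotonicity and local Lipschitz continuity of $f$ imply that the remaining integrand is nonnegative on $\Sigma_\lambda \setminus \Sigma_\lambda^-$, while on $\Sigma_\lambda^-$ it is bounded below by $-C_u A_2(x,z) w_\lambda^-(z)$, where $C_u$ is the Lipschitz constant of $f$ on $[0,\|u\|_{L^\infty}]$.

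Combined with the trivial bound $A_2 \le G^+_\infty(x^\lambda, z^\lambda)$, evaluating the resulting inequality at $x \in \Sigma_\lambda^-$ yields
\[
w_\lambda^-(x) \le C_u \int_{\Sigma_\lambda^-} G^+_\infty(x^\lambda, z^\lambda)\, w_\lambda^-(z)\,dz.
\]
Since $u$ is bounded, $M_\lambda := \|w_\lambda^-\|_{L^\infty(\Sigma_\lambda)}$ is finite; changing variable $y = z^\lambda \in \Sigma_\lambda^\lambda \subset \Sigma_{2\lambda}$ on the right and observing that $x^\lambda \in \Sigma_{2\lambda}$ for $x \in \Sigma_\lambda$, I arrive at
\[
M_\lambda \le C_u\, M_\lambda \sup_{\xi \in \Sigma_{2\lambda}} \int_{\Sigma_{2\lambda}} G^+_\infty(\xi, y)\,dy.
\]
By Corollary~\ref{sec:introduction-2} the supremum on the right tends to $0$ as $\lambda \to 0^+$, so for $\lambda \in [0,\lambda_0]$ with $\lambda_0$ small enough the prefactor is strictly less than $1$, forcing $M_\lambda = 0$, which is precisely $(\cC_\lambda)$.

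The main obstacle is the lack of an a priori sign for $A_1$: without additional input, the term $A_1 f(u(z))$ could spoil the estimate. The \emph{combined} reflection inequality $A_1 + A_2 > 0$ of Lemma~\ref{green-monotonicity} is designed exactly to trade this unsigned contribution against a nonnegative quantity that can be dropped for free, leaving only the Lipschitz discrepancy $f(u(z^\lambda)) - f(u(z))$ to be absorbed through the small-volume estimate of Corollary~\ref{sec:introduction-2}.
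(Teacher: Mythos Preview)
Your proof is correct and follows essentially the same route as the paper: the same Green-representation splitting, the same use of Lemma~\ref{green-monotonicity} to drop the nonnegative term $(A_1+A_2)f(u(z))$, the same restriction to the negativity set, the same replacement of $A_2$ by $G^+_\infty(x^\lambda,z^\lambda)$, and the same closing via Corollary~\ref{sec:introduction-2}. Your presentation even makes one point slightly cleaner than the paper's, taking the supremum over $\xi \in \Sigma_{2\lambda}$ (where $x^\lambda$ actually lives) rather than over $\Sigma_{\lambda_0}$ as in~\eqref{eq:lambda_0}.
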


\begin{proof}
By Corollary~\ref{sec:introduction-2}, we may fix $\lambda_0>0$ such
that
\begin{equation}
  \label{eq:lambda_0}
\int_{\Sigma_{2\lambda_0}} G^+_\infty(x,y)\,dy<C_u^{-1} \qquad \text{for every $x \in \Sigma_{\lambda_0}$},
\end{equation}
For fixed $\lambda \in [0,\lambda_0]$, we consider the
difference function
$$
v: \Sigma_\lambda \to \R,\qquad v(x)= u(x^\lambda)-u(x)
$$
and the set
$$
W:= \{x \in \Sigma_\lambda \::\:
v(x)<0\}.
$$
For $x \in W$ we estimate, using Lemma~\ref{green-monotonicity} and \eqref{eq:new11},
\begin{align*}
0&>v(x)=\int_{\R^N_+}[ G^+_\infty(x^{\lambda},y)- G^+_\infty(x,y)]f(u(y))\,dy=
\int_{\Sigma_{\lambda}} \ldots \,dy
+ \int_{\R^N_+ \setminus \Sigma_{\lambda}} \ldots \,dy \\
&=
\int_{\Sigma_{\lambda}}\Bigl([ G^+_\infty(x^{\lambda},y)- G^+_\infty(x,y)]f(u(y))+
[ G^+_\infty(x^{\lambda},y^{\lambda})
- G^+_\infty(x,y^{\lambda})]\,f(u(y^{\lambda}))\Bigr)\,dy\\
& +\int_{J_{\lambda}}[G(x^{\lambda},y)- G^+_\infty(x,y)]f(u(y))\, dy \ge \int_{\Sigma_{\lambda}}[ G^+_\infty(x^{{\lambda}},y^{\lambda})
- G^+_\infty(x,y^{\lambda})][f(u(y^{\lambda}))-f(u(y))]\,dy\\
&\ge  \int_{W}[ G^+_\infty(x^{\lambda},y^{\lambda})
- G^+_\infty(x,y^{\lambda})][f(u(y^{\lambda}))-f(u(y))]\,dy
\ge
\int_{W} G^+_\infty(x^{{\lambda}},y^{\lambda})[f(u(y^{\lambda}))-f(u(y))]\,dy\\
&\ge C_u \int_{W} G^+_\infty(x^{{\lambda}},y^{\lambda})v(y)\,dy \ge - C_u
\|v\|_{L^\infty(W)} \int_{W} G^+_\infty(x^{{\lambda}},y^{\lambda})\,dy \ge -C_u \|v\|_{L^\infty(W)}
\int_{\Sigma_{2\lambda_0}} G^+_\infty(x^{{\lambda}},y)\,dy
\end{align*}
Combining this with \eqref{eq:lambda_0} we infer
$\|v\|_{L^\infty(W)} \le C \|v\|_{L^\infty(W)}$ with some constant $C
  \in (0,1)$, hence $v \equiv 0$ on $W$ and therefore $W= \varnothing$
  by definition. We therefore conclude that $(\cC_{\lambda})$ holds.
\end{proof}

Next we put
$$
\lambda_*:= \sup \{\lambda>0\::\: \text{$(\cC_{\lambda'})$ holds for all
    $\lambda' \le \lambda$}\}.
$$
Then $\lambda_* \ge \lambda_0$. Using the continuity of $u$, it is easy to see that
$(\cC_{\lambda_*})$ holds. We suppose by contradiction that
\begin{equation}
\label{eq:lambda_infty}
\lambda_*<\infty
\end{equation}
 Then there exists a sequence of numbers
$\lambda_n>\lambda^*$, $n \in \N$ and points
$x^n \in \Sigma_{\lambda_n}$ such that
\begin{equation}
  \label{eq:contradiction}
u(x^n)>u((x^n)^{\lambda_n})\qquad \qquad \text{for all $n$}
\end{equation}
and
\begin{equation}
  \label{eq:limit}
\lambda_n \to \lambda^* \qquad \text{as $n \to \infty$.}
\end{equation}
We may further assume that
\begin{equation}
  \label{eq:Linfty}
u(x^n)-u((x^n)^{\lambda_n}) > \frac{1}{2} \sup_{x \in
  \Sigma_{\lambda_n}}\Bigl(u(x)-u(x^{\lambda_n})\Bigr).
\end{equation}
In the following we write $x= (x_1,\hat x)$ for $x \in \R^N$ with
$\hat x \in \R^{N-1}$. For $n \in \N$ we define the translated functions
$$
u_n: \R^N \to \R,\qquad u_n(y)= u(y_1,\hat x^n+\hat y)
$$
and
$$
v_n: \Sigma_{\lambda_n} \to \R, \qquad v_n(y)= u_n(y^{\lambda_n})-u(y).
$$
Then \eqref{eq:contradiction} is rewritten as
\begin{equation}
  \label{eq:translated}
v_n(x^n_1,0)= u_n(2\lambda_n - x^n_1,0)-u_n(x^n_1,0)<0 \qquad
\text{for all $n$.}
\end{equation}
We also consider the sets
$$
W_n:=\{x \in \Sigma_{\lambda_n} \::\:
v_n(x)<0\}.
$$
We let $\Lambda:= 2 \max_n \lambda_n<\infty$. For abbreviation, we
also put $z^n=(x^n_1,0) \in \Sigma_{\lambda_n}$
for $n \in \N$. Using~\eqref{eq:Linfty} and arguing similarly as in the proof of Lemma~\ref{sec:introduction-3}, we find
\begin{equation}
  \label{eq:new.extra.1}
-\|v_n\|_{L^\infty(W_n)}\ge 2 v_n(z^n)\ge 2
\int_{W_n}\bigl( G^+_\infty((z^n)^{\lambda_n},y^{\lambda_n})-
 G^+_\infty(z^n,y^{\lambda_n})\bigr)[f(u_n(y^{\lambda_n}))-f(u_n(y))]\,dy
\end{equation}
We now pass to a subsequence such that $x^n_1 \to t \in
[0,\lambda_*]$. Moreover, we note that the sequence $(u_n)_n$ is
uniformly equicontinuous on compact subsets of $\R^N$. Indeed, this
follows from Lemma~\ref{lem:intest} below and the boundary estimate
(\ref{eq:5}) which holds uniformly for $u_n$, $n \in \N$ in place of $u$.
Hence we may pass to a subsequence such that $u_n \to
\bar u$ in $C_{loc}^0(\R^N)\cap L^\infty(\R^N)$ and $\bar u \equiv 0$
on $\R^N \setminus \R^N_+$.
We distinguish three cases.\\
{\bf Case 1:} $t=\lambda_*$, i.e. $z^n \to (\lambda_*,0)$.
In this case \eqref{eq:new.extra.1} implies that
\begin{align}
-\|v_n\|_{L^\infty(W_n)}&\ge 2C_u \int_{W_n}\bigl( G^+_\infty((z^n)^{\lambda_n},y^{\lambda_n})-
 G^+_\infty(z^n,y^{\lambda_n})\bigr)v_n(y)\,dy \nonumber \\
&\ge -  2C_u \|v_n\|_{L^\infty(W_n)}
\int_{\Sigma_{\lambda_n}}\bigl( G^+_\infty((z^n)^{\lambda_n},y^{\lambda_n})-
 G^+_\infty(z^n,y^{\lambda_n})\bigr)\,dy \nonumber \\
&= -  o(1) \|v_n\|_{L^\infty(W_n)}, \label{combine0}
\end{align}
where in the last step we used Corollary~\ref{sec:introduction-2.1}
and the fact that also $(z^n)^{\lambda_n} \to (\lambda_*,0)$.
Hence $\|v_n\|_{L^\infty(W_n)}= 0$ for $n$ large, contrary to
\eqref{eq:translated}.\\
{\bf Case 2:} $t< \lambda^*$ and $\bar u \not \equiv 0$.\\
We have
$$
 \int_{\R^N}u_n(x)\Ds\phi(x)\,dx=\int_{\R^N_+}f(u_n(x))\phi(x)\,dx\qquad
 \text{for all $\phi\in C^2_c(\R^N_+).$}
$$
It thus follows from the dominated convergence theorem that
$$
 \int_{\R^N}\bar{u}(x)\Ds\phi(x)\,dx=\int_{\R^N_+}f(\bar{u}(x))\phi(x)\,dx\qquad\text{for
   all $\phi\in C^2_c(\R^N_+).$}
$$
Hence, by  Theorem
\ref{green-poiss-form_plus}, $\bar{u}$ is represented as
$$
\bar u(x)= \int_{\R^N_+} G^+_\infty(x,y)f(\bar u(y))\,dy \qquad \text{for all $x \in \R^N_+$}.
$$
Since $u \not \equiv 0$ and $f(t)>0$ for $t>0$, it then follows that $\bar u$ and $f
\circ \bar u$ are strictly positive on $\R^N_+$. Moreover, by the locally uniform
convergence $u_n \to \bar u$, we have
$$
\bar u(x)\le \bar u(x^{\lambda_*})\qquad \text{for all $x \in
\Sigma_{\lambda_*}$.}
$$
Indeed this inequality is strict, since we have a strict inequality in
(\ref{eq:new11}) for $\bar u$ in place of $u$ and $\lambda=\lambda^*$,
so that
\begin{align}
\bar u&(x^{\lambda_*})-\bar u(x)=
\int_{\R^N_+}[ G^+_\infty(x^{{\lambda_*}},y)- G^+_\infty(x,y)]f(\bar u(y))\,dy=
\int_{\Sigma_{\lambda_*}} \ldots \,dy
+ \int_{\R^N_+ \setminus \Sigma_{\lambda_*}} \ldots \,dy \nonumber\\
&=
\int_{\Sigma_{\lambda_*}}\Bigl([ G^+_\infty(x^{{\lambda_*}},y)- G^+_\infty(x,y)]f(\bar u(y))+
[ G^+_\infty(x^{{\lambda_*}},y^{\lambda_*})
- G^+_\infty(x,y^{\lambda_*})]\,f(\bar u(y^{\lambda_*}))\Bigr)\,dy \nonumber\\
& +\int_{J_{\lambda_*}}[ G^+_\infty(x^{{\lambda_*}},y)- G^+_\infty(x,y)]f(\bar u(y))\, dy \nonumber\\
& > \int_{\Sigma_{\lambda^*}}[ G^+_\infty(x^{{\lambda_*}},y^{\lambda_*})
- G^+_\infty(x,y^{\lambda_*})]f(\bar u(y^{\lambda_*}))-f(\bar u(y))]\,dy
\ge 0 \quad \text{for $x \in \Sigma_{\lambda_*}$.} \label{eq:--int_s_l}
\end{align}
On the other hand, also by the locally uniform
convergence and \eqref{eq:translated}, we have
$$
\bar u(2\lambda_*-t,0)- \bar u(t,0)= \lim_{n \to
  \infty}\bigl(u_n((z^n)^{\lambda_n})-u(z_n)\bigr) \le 0.
$$
This is a contradiction.\\
{\bf Case 3:} $\bar u \equiv 0$. We use \eqref{eq:new.extra.1} and Lemma~\ref{green-monotonicity} to estimate, for $r>0$,
\begin{align}
- \|v_n\|_{L^\infty(W_n)}
\ge  &2\int_{W_n} G^+_\infty((z^n)^{\lambda_n},y^{\lambda_n})
[f(u_n(y^{\lambda_n}))-f(u_n(y))]\,dy
\nonumber \\
=  &2\int_{W_n \cap B_r(0)} G^+_\infty((z^n)^{\lambda_n},y^{\lambda_n})
[f(u_n(y^{\lambda_n}))-f(u_n(y))]\,dy
\nonumber \\
&+
2\int_{W_n \setminus B_r(0)} G^+_\infty((z^n)^{\lambda_n},y^{\lambda_n})
[f(u_n(y^{\lambda_n}))-f(u_n(y))]\,dy, \label{combine1}
\end{align}
where
\begin{align}
\int_{W_n \setminus B_r(0)}& G^+_\infty((z^n)^{\lambda_n},y^{\lambda_n})
[f(u(y^{\lambda_n}))-f(u(y))]\,dy \ge
C_u \int_{W_n \setminus B_r(0)} G^+_\infty((z^n)^{\lambda_n},y^{\lambda_n})
v_n(u(y))\,dy \nonumber\\
& \ge -C_u \|v_n\|_{L^\infty(W_n)}
\int_{\Sigma_{\Lambda} \setminus B_r(0)} G^+_\infty((z^n)^{\lambda_n},y)\:dy \label{combine2}
\end{align}
Since $((z^n)^{\lambda_n})_n \subset \Sigma_{\Lambda}$ is a bounded sequence,
Lemma~\ref{sec:introduction-1} implies that
$$
\sup_{n \in \N}\int_{\Sigma_{\Lambda} \setminus
  B_r(0)} G^+_\infty((z^n)^{\lambda_n},y)\:dy \to 0 \qquad \text{as $r \to \infty$.}
$$
Hence we may fix $r>0$ such that
\begin{equation}
 \label{combine3}
\int_{\Sigma_{\Lambda} \setminus B_r(0)}G^+_\infty((z^n)^{\lambda_n},y)\:dy
<\frac{1}{4C_u} \qquad \text{for all $n \in \N$}.
\end{equation}
Moreover, assumption \eqref{eq:extra-assumption} and the locally
uniform convergence $u_n \to 0$ imply that there exists
a sequence of numbers $\eps_n>0$, $\eps_n \to 0$ such that
$$
f(u_n(y^{\lambda_n}))-f(u_n(y)) \ge \eps_n  v_n(y) \qquad \text{for $y \in W_n \cap B_r(0)$,}
$$
so that
\begin{align}
\int_{W_n \cap B_r(0)} G^+_\infty((z^n)^{\lambda_n},y^{\lambda_n})
[f(u_n(y^{\lambda_n}))-f(u_n(y))]\,dy &\ge -\eps_n \|v_n\|_{L^\infty(W_n)}
\int_{B_r(0)} G^+_\infty((z^n)^{\lambda_n},y^{\lambda_n})\,dy \nonumber\\
& \ge - C \eps_n \|v_n\|_{L^\infty(W_n)}  \label{combine4}
\end{align}
with some constant $C=C(r)>0$. Combining \eqref{combine1}, \eqref{combine2}, \eqref{combine3} and \eqref{combine4}, we get
$$
\|v_n\|_{L^\infty(W_n)} \le \Bigl(\frac{1}{2}+2C\eps_n \Bigr) \|v_n\|_{L^\infty(W_n)},
$$
so we conclude that $\|v_n\|_{L^\infty}(W_n)= 0$ for large $n$,
contradicting again~(\ref{eq:translated}). We have thus proved that
property $(\cC_\lambda)$ holds for all $\lambda>0$, which implies that
$u$ is increasing in $x_1$. It thus remains to show that $u$ is strictly
increasing in $x_1$ if $u \not \equiv 0$. This however follows since
we can now derive inequality (\ref{eq:--int_s_l}) for $u$ in place
of $\bar u$ and all $\lambda>0$ in place of $\lambda^*$. The proof of
Theorem~\ref{main-result-1} is thus finished.


\section{Proof of the Liouville Theorem in the half-space}
\label{sec:proof-theorem}
This section is devoted to the proof of Theorem~\ref{main-result-2}.
Let $s\in(0,1)$ and assume that $q>1$ if $N\le 1+2s$ and $1<q <
\frac{N-1+2s}{N-1-2s}$ if $N> 1+2s$. Suppose by contradiction that
there exists a bounded solution $u \not \equiv 0$ of
(\ref{eq:main-liouville}). Theorem~\ref{main-result-1} implies that
$u$ is strictly increasing in
$x_1$. In particular, we may define
$$
\tilde u \in L^\infty(\R^{N-1}),\qquad \tilde u(x')= \lim_{x_1 \to
  \infty} \tilde u(x_1,x') \qquad \text{for $x' \in \R^{N-1}$.}
$$
Here and in the following, we write $x=(x_1,x') \in \R^N$ with $x'
\in \R^{N-1}$. Note that $\tilde u>0$ on $\R^{N-1}$. For $\tau>0$,
define $u_\tau \in L^\infty(\R^N) \cap C(\R^N)$ by $u_\tau(x)=
u(x+\tau e_1) $. Then
\begin{equation}
  \label{eq:2}
u_\tau \to u_\infty \qquad \text{pointwise on $\R^N$ with $u_\infty \in L^\infty(\R^N)$ defined by $u_\infty(x)= \tilde
u(x')$.}
\end{equation}
Moreover, we have $\Ds u_\tau = u_\tau^q$ in $H_\tau:=\{x \in \R^N\::\:
x_1 >-\tau\}$. Hence (\ref{eq:2}) implies that
$\Ds u_\infty = u_\infty^q$ in $\R^N$ in distributional sense.
Indeed, let $\vp \in C_c^2(\R^N)$. Then
\begin{align*}
\int_{\R^N}u_\infty^q(x) \vp(x)\,dx = \lim_{\tau \to \infty}
\int_{H_\tau}u_\tau^q(x) \vp(x)\,dx&=\lim_{\tau \to \infty}
\int_{\R^N}u_\tau(x) [\Ds \vp](x)\,dx\\
&= \int_{\R^N}u_\infty(x) [\Ds \vp](x)\,dx
\end{align*}
by Lebesgue's theorem and the estimate \eqref{sec:introduction-4}.
It then follows from the regularity
results in \cite{Sil} and \cite{cabre-sire} that also $\Ds u_\infty=
u_\infty^q$ in classical sense in $\R^N$. Moreover, $u_\infty>0$ in
$\R^N$ since $\tilde u>0$ in $\R^{N-1}$. In case $N=1$, $u_\infty$
is a positive constant on $\R$ which contradicts the equation $\Ds
u_\infty= u_\infty^q$. In case $N \ge 2$, we deduce that
\begin{equation}
  \label{eq:main-liouville-rn-1}
\displaystyle \Ds \tilde u=  {\tilde u}^q \qquad \text{in
$\R^{N-1}$}.
\end{equation}
Indeed,
\begin{align*}
{\tilde u}^q(x')&= {u_\infty}^q(x)= \Ds u_\infty(x) = a_{N,s}
\int_{\R^N}\frac{u_\infty(x+z)+u_\infty(x-z)-2u_\infty(x)}{|z|^{N+2s}}\,dz\\
&= a_{N,s}
\int_{\R^N}\frac{\tilde u(x'+z')+\tilde u(x'-z')-2\tilde u(x')}{(|z'|^2+
  z_1^2)^{\frac{N}{2}+s}}\,dz\\
&= a_{N,s}
\int_{\R^{N-1}}\frac{\tilde u(x'+z')+\tilde u(x'-z')-2\tilde
  u(x')}{|z'|^{N-1+2s}}\:\Bigl[\frac{1}{|z'|} \int_{\R}{\bigl[1+
  \bigl(\frac{z_1}{|z'|}\bigr)^2\bigr]^{-\frac{N}{2}-s}}\,dz_1\Bigr]\,dz'\\
&= \frac{a_{N,s}}{a_{N-1,s}} [\Ds \tilde u](x')
\int_{\R}(1+\lambda^2)^{-\frac{N}{2}-s}\,d\lambda = [\Ds \tilde
u](x')\qquad \text{for $x' \in \R^{N-1}$},
\end{align*}
since
$$
\int_{\R}(1+\lambda^2)^{-\frac{N}{2}-s}\,d\lambda=
B(\frac{1}{2},\frac{N-1}{2}+s)= \frac{\sqrt{\pi} \Gamma(\frac{N-1}{2}+s)}{\Gamma(\frac{N}{2}+s)}=
\frac{a_{N-1,s}}{a_{N,s}}.
$$
However, since $\tilde u >0$, this is impossible by
Theorem~\ref{main-result-3} applied to $N-1 \ge 1$ in place of
$N$. The proof is finished.

\section{Appendix: Interior H\"{o}lder estimates for distributional
  solutions to the equation $\Ds u=f$}

In the proof of Theorem~\ref{main-result-1} in
Section~\ref{sec:proof-monot-result} we used the following lemma on
local H\"older regularity.
Since the proofs available in the literature are only concerned with
the case $N>2s$ and additional assumptions (see for instance
\cite[Proposition 2.1.9]{Sil}), we will give a proof for the
convenience of the reader. The proof is similar to arguments in
\cite[page 263]{lieb-loss}.

\begin{Lemma}\label{lem:intest}
 Let   $f \in
 L^\infty(B_1)$, and let $u\in \cL^1_s\cap L^\infty_{loc}(B_1)$ such that  $\Ds u=f$
in $B_1$. Then for $r\in(0,1)$ and
 every $\a\in(0,\min(1,2s))$ there exists a constant
$C_{s,N,r,\a}>0$ such that \be\label{eq:estuN-int}
\|u\|_{C^{0,\a}(B_r)}\leq C_{s,N,r,\a} \left ( \|u\|_{L^\infty(B_r)}
+\|f\|_{L^\infty(B_1)} \right). \ee
\end{Lemma}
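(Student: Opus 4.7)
I represent $u$ locally via the Green--Poisson formula from Corollary~\ref{cor:represeBR+} and estimate the two pieces separately. For $x_0\in B_r$ fix $\rho=(1-r)/2$, so that $B_\rho(x_0)\subset B_1$. Since $\Ds$ is translation-invariant and $u\in\cL^1_s\cap L^\infty(B_\rho(x_0))$, applying Corollary~\ref{cor:represeBR+} on the translated ball yields the decomposition
$$
u(y)=h(y)+v(y) \qquad \text{for a.e.\ } y\in B_\rho(x_0),
$$
with
$$
h(y)=\int_{\R^N\setminus B_\rho(x_0)}\Gamma_\rho(y-x_0,z-x_0)\,u(z)\,dz, \qquad v(y)=\int_{B_\rho(x_0)} G_\rho(y-x_0,z-x_0)\,f(z)\,dz.
$$
It then suffices to bound each piece in $C^{0,\alpha}(B_{\rho/2}(x_0))$; covering $B_r$ by finitely many such sub-balls yields the lemma.

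\emph{Green potential $v$.} Lemma~\ref{sec:preliminaries-1}, after rescaling $B_\rho(x_0)$ to the unit ball, gives $\|v\|_{L^\infty}\le C\|f\|_{L^\infty(B_1)}$. For the H\"{o}lder seminorm I exploit the explicit bounds~\eqref{eq:G-est}, which control $G_\rho(y,z)$ by $|y-z|^{2s-N}$ when $N>2s$, and by the analogous logarithmic or $|y-z|^{2s-1}$ expression in the two $N=1$ cases. Given $y_1,y_2\in B_{\rho/2}(x_0)$, the standard near/far splitting of the domain of integration into $\{|z-y_1|\le 2|y_1-y_2|\}$ and its complement --- with a direct size estimate on the near piece and the mean value theorem applied to $G_\rho(\cdot,z-x_0)$ on the far piece --- yields $|v(y_1)-v(y_2)|\le C|y_1-y_2|^\alpha\|f\|_{L^\infty(B_1)}$ for every $\alpha<\min(1,2s)$. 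This is the technically delicate step: the diagonal singularity of $G_\rho$ at $y=z$ rules out naive differentiation under the integral, and the near/far decomposition is the fractional-Laplacian analogue of the Riesz-potential argument on page~263 of Lieb--Loss.

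\emph{Poisson integral $h$.} Since $\Gamma_\rho(\cdot,z-x_0)$ is $C^\infty$ on $B_\rho(x_0)$ for every fixed $z$ with $|z-x_0|>\rho$, the function $h$ is smooth in $B_\rho(x_0)$ and I may differentiate under the integral. A direct computation starting from~\eqref{eq:4} gives, for $y\in B_{\rho/2}(x_0)$,
$$
|\nabla_y \Gamma_\rho(y-x_0,z-x_0)|\le C_\rho\,(|z-x_0|^2-\rho^2)^{-s}\,|y-z|^{-N}.
$$
Splitting $\R^N\setminus B_\rho(x_0)$ into the annulus $\{\rho<|z-x_0|\le 2\rho\}\subset B_1$, where $u$ is locally bounded and the singularity $(|z-x_0|^2-\rho^2)^{-s}$ is integrable since $s<1$, and its exterior, where the integrand decays like $|z|^{-N-2s}$ and is controlled by the $\cL^1_s$-weight of $u$, yields a uniform Lipschitz bound on $h$ in $B_{\rho/2}(x_0)$, hence \emph{a fortiori} the required $C^{0,\alpha}$ bound. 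Adding the two estimates and covering $B_r$ by finitely many sub-balls completes the proof.
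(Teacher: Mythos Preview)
Your approach is correct and differs from the paper's in both halves of the decomposition. The paper writes $u=w+v$ with $v=\Phi*(\eta f)$, the convolution of a cutoff of $f$ against the global Riesz kernel $\Phi$, so that $\Ds v=\eta f$ on all of $\R^N$; then $w=u-v$ is $s$-harmonic in $B_r$ and its gradient is bounded by quoting \cite[Lemma~3.2]{BKN}. The H\"older estimate for $v$ follows from the elementary inequality $|a^{-m}-b^{-m}|\le C|a-b|^\alpha\max(a^{-m-\alpha},b^{-m-\alpha})$ applied to the pure power kernel, with no need for a near/far split. You instead localize via the Green--Poisson formula on a small ball (Corollary~\ref{cor:represeBR+}): your treatment of the harmonic piece $h$ by direct differentiation of the Poisson integral is self-contained within the paper and essentially reproduces the content of the external reference \cite{BKN}, which is a gain; on the other hand your potential $v$ is built from $G_\rho$ rather than $\Phi$, so the ``far'' piece of your near/far split requires either the decomposition $G_\rho=\Phi-(\text{regular part})$ or an explicit gradient bound for $G_\rho(\cdot,z)$ derived from the formula~\eqref{eq:14}, which is a little more work than for the bare Riesz kernel.

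One remark that applies equally to both arguments: the harmonic piece is controlled through values of $u$ on $\R^N\setminus B_\rho(x_0)$ (in your version, via the annulus bound and the $\cL^1_s$-tail you invoke; in the paper's version, through whatever norm of $w$ the estimate from \cite{BKN} actually requires). Thus the right-hand side of~\eqref{eq:estuN-int} should in fact carry an $\cL^1_s$-type weight of $u$, not merely $\|u\|_{L^\infty(B_r)}$. This is a harmless slip in the stated inequality: the only application, in Section~\ref{sec:proof-monot-result}, is to a sequence uniformly bounded in $L^\infty(\R^N)$, so the conclusion is unaffected.
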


\begin{proof}
Let  $\eta\in C^\infty_c(\R^N)$ be such that $\eta=1$ on
$B_r$, $\eta=0$ on $\R^N\setminus B_1$  and $0 \le \eta\leq 1$ on $\R^N$.
Consider the Riesz potential, see \cite{BGR} and \cite{BKN},
$$
\Phi(x,y)=
\begin{cases}
\displaystyle C_{N,s}|x-y|^{2s-N}\quad \textrm{ for } N>2s,\\
\displaystyle\frac{1}{\pi}\log\frac{1}{|x|} \quad \textrm{ for } 1=N=2s,\\
\displaystyle -C_{1,s}|x-y|^{2s-N}\quad \textrm{ for } N=1<2s,\\
\end{cases}
$$
with a positive normalization constant $C_{N,s}$. Then the function $v(x)=\int_{\R^N}\Phi(x,y)(\eta f)(y) dy$ satisfies
\be\label{eq:v-solv}
\Ds v(x)=\eta(x) f(x)\quad \textrm{ for all } x\in\R^N.
\ee
The following inequalities hold for all $a,b>0$, $\a\in(0,1)$ and
$m\in\R$ with $m+\a>0$:
\be |b^{-m}-a^{-m}|\leq
\frac{|m|}{m+\a}|b-a|^\a\max(  a^{-(m+\a)} ,   b^{-(m+\a)} ) \ee \be
|\log b-\log a |\leq \frac{1}{\a}|b-a|^\a\max(  a^{-\a} ,   b^{-\a}
). \ee
These estimates were proved in \cite[page 263]{lieb-loss} in the case
$m\geq 1$, and a slight change of their argument yields the more
general case considered here. We thus get,
for every $x,y,z\in\R^N$,
$$
\bigl||x-z|^{-m}- |y-z|^{-m} \bigr|\leq \frac{|m|}{m+\a} |x-y|^{\a}\bigl(
|x-z|^{-(m+\a)}+ |y-z|^{-(m+\a)}  \bigr)
$$
and
$$
\bigl|\log|x-z|- \log|y-z| \bigr|\leq \frac{1}{\a} |x-y|^{\a}\bigl( |x-z|^{-\a}+
|y-z|^{-\a}  \bigr).
$$
In the case $N\geq 2s$ we therefore have, for $\a\in(0, \min(2s,1))$ and $x,y\in
B_r$,
\begin{align}\label{eq:estvNg2s}
\nonumber | v(x)-v(y)| &\leq C_{s,\a}|x-y|^\a \int_{\R^N}
|x-y|^{2s-N-\a}\eta(y) |f(y)|dy \\
\nonumber&\leq  C_{s,\a}|x-y|^\a \|f\|_{L^\infty(B_1)} \int_{B(x,2)}|x-y|^{2s-N-\a}dy\\
 &\leq C_{s,\a} \|f\|_{L^\infty(B_1)} |x-y|^\a.
\end{align}
Moreover, in the case $N=1<2s$ we have, for $\a\in(0, \min(2s,1))$ and
$x,y\in B_r$,
\be\label{eq:estvNl2s} | v(x)-v(y)|\leq C_{s,\a}|x-y|^\a
\|f\|_{L^\infty(B_1)} \int_{B_1}
|x-y|^{2s-1-\a}dy\leq
C_{s,\a}\|f\|_{L^\infty(\O)} |x-y|^\a. \ee
 Hence combing
\eqref{eq:estvNg2s} and \eqref{eq:estvNl2s}, we conclude that
\be\label{eq:estvNok} \|v\|_{C^{0,\a}(B_r)}\leq C_{s,N}
\|f\|_{L^\infty(B_1)}, \ee
  for every $\a\in(0,\min(1,2s))$.\\ 
Next we note that the function $w:=u-v$ satisfies $\Ds w = 0$ in $B_r$ by
\eqref{eq:v-solv}. Therefore, thanks to \cite[Lemma 3.2]{BKN}, we
get, for every $r'\in(0,r)$,
$$
\|\n w\|_{L^\infty(B_r')} \leq C_{N,s,r'} \|w\|_{L^\infty(B_1)} \leq
C_{N,s,r'}(\|u\|_{L^\infty(B_r)}+\|v\|_{L^\infty(B_r)}).
$$
From this, together with \eqref{eq:estvNok}, we conclude that
$$
\|u\|_{C^{0,\a}(B_{r' })}=\|w+v\|_{C^{0,\a}(B_{r'})}\leq
C_{s,N,r'}\left( \|u\|_{L^\infty(B_{r})} + \|f\|_{L^\infty(B_1)}
\right),
$$
  for every $\a\in(0,\min(1,2s))$.

\end{proof}


\end{document}